\theoremstyle{plain}
\newtheorem{thm}{Theorem}[section]
\newtheorem{introthm}{Theorem}
\newtheorem{introconj}[introthm]{Conjecture}
\newtheorem{prop}[thm]{Proposition}
\newtheorem{lem}[thm]{Lemma}
\newtheorem{cor}[thm]{Corollary}
\theoremstyle{definition}
\newtheorem{defn}[thm]{Definition}
\newtheorem{problem}[thm]{Problem}
\theoremstyle{remark}
\newtheorem{remark}[thm]{Remark}
\newcommand{\bC}{{\mathbb C}}
\newcommand{\bE}{{\mathbb E}}
\newcommand{\bN}{{\mathbb N}}
\newcommand{\bQ}{{\mathbb Q}}
\newcommand{\bZ}{{\mathbb Z}}
\newcommand{\cE}{{\mathcal E}}
\newcommand{\cG}{{\mathcal G}}
\newcommand{\cN}{{\mathcal N}}
\newcommand{\cO}{{\mathcal O}}
\newcommand{\cS}{{\mathcal S}}
\newcommand{\cV}{{\mathcal V}}
\newcommand\Det{\operatorname{det}}
\newcommand\ev{\operatorname{ev}}
\newcommand\Fix{{\operatorname{Fix}}}
\newcommand\Gal{{\operatorname{Gal}}}
\newcommand\id{\operatorname{id}}
\newcommand\spec{\operatorname{spec}}
\DeclareMathOperator{\tr}{tr}
\newcommand{\Qbar}{\overline{\bQ}}
\newcommand{\norm}[1]{\left\|#1\right\|}
\newcommand{\set}[1]{\left\{#1\right\}}
\newcommand{\paren}[1]{\left(#1\right)}
\newcommand{\ang}[1]{\left\langle#1\right\rangle}
\newcommand{\ip}[1]{\left\langle#1\right\rangle}
\newcommand{\uu}[1]{{\underline{\underline{#1}}}}
\definecolor{green}{RGB}{13,177,75} %
\DeclareFontFamily{U}{mathb}{\hyphenchar\font45}
\DeclareFontShape{U}{mathb}{m}{n}{
      <5> <6> <7> <8> <9> <10> gen * mathb
      <10.95> mathb10 <12> <14.4> <17.28> <20.74> <24.88> mathb12
      }{} 
\DeclareSymbolFont{mathb}{U}{mathb}{m}{n}
\DeclareMathSymbol{\Asterisk}      {2}{mathb}{"06}
\newcommand{\gp}{\mathop{\begin{tikzpicture}[baseline]
\node[circle, fill=cyan, draw=black, inner sep=0pt, minimum size=3pt] (mid) at (0, 2.5pt) {};
\foreach \x in {0,60,...,300} {
\node[circle, fill=cyan, draw, inner sep=0pt, minimum size=3pt] (\x) at ($(mid)!6pt!(mid.\x)$) {};
\draw (\x) -- (mid) ;
}
\end{tikzpicture}}}
\newcommand{\sstrc}{\begin{tikzpicture}
    \node [inner sep=0, minimum size=0] at (-6pt,0) {};
    \draw (-5pt,0) -- (0,0) node[draw, shade, circle, ball color=black!60!white, inner sep=0pt, minimum size=4pt] {};
    \node at (0,1pt) {};
\end{tikzpicture}}
\title{Strong $1$-boundedness, $L^{2}$-Betti numbers,  algebraic soficity, and graph products}
\author{Ian Charlesworth}
\address{Cardiff University, School of Mathematics, Abacws, Senghennydd Rd, Cardiff CF24 4AG, United Kingdom}
\email{charlesworthi@cardiff.ac.uk}
\urladdr{https://www.ilcharle.com/}
\author{Rolando de Santiago}
\address{Department of Mathematics, Purdue University, Mathematical Sciences Bldg, 150 N University St, West Lafayette, IN 47907}
\email{desantir@purdue.edu}
\urladdr{https://www.math.purdue.edu/~desantir}
\author{Ben Hayes}
\address{Department of Mathematics,
University of Virginia, 141 Cabell Drive, Kerchof Hall, Charlottesville, VA, 22904}
\email{brh5c@virginia.edu}
\urladdr{https://sites.google.com/site/benhayeshomepage/home}
\author{David Jekel}
\address{\parbox{\linewidth}{Department of Mathematics, University of California, \\
San Diego, 9500 Gilman Drive \# 0112, La Jolla, CA 92093}}
\email{djekel@ucsd.edu}
\urladdr{http://davidjekel.com}
\author{Srivatsav Kunnawalkam Elayavalli}
\address{\parbox{\linewidth}{Department of Mathematics, University of California, \\
San Diego, 9500 Gilman Drive \# 0112, La Jolla, CA 92093}}
\email{srivatsav.kunnawalkam.elayavalli@vanderbilt.edu}
\urladdr{https://sites.google.com/view/srivatsavke}
\author{Brent Nelson}
\address{Department of Mathematics, Michigan State University, 619 Red Cedar Road, C212 Wells Hall, East Lansing, MI 48824}
\email{brent@math.msu.edu}
\urladdr{https://users.math.msu.edu/users/banelson/}
\begin{document}

\maketitle

\begin{abstract}
    We show that graph products of non trivial finite dimensional von Neumann algebras are strongly 1-bounded when the underlying $*$-algebra has vanishing first $L^2$-Betti number. The proof uses a combination of the following two key ideas to obtain lower bounds on the Fuglede--Kadison determinant of  matrix polynomials in a generating set: a  notion called ``algebraic soficity" for $*$-algebras allowing for the existence of Galois bounded microstates with asymptotically constant diagonals; a probabilistic construction of the authors of permutation models for graph independence over the diagonal. 
\end{abstract}

\section{Introduction}

Finite dimensional approximations of infinite dimensional objects are a common theme in analysis, dynamics, and operator algebras.
In the context of groups, they arise in both soficity and Connes embeddability of the group von Neumann algebra (sometimes referred to as hyperlinearity), which are the ability to be approximated by permutations or finite dimensional unitary matrices, respectively.
Connes-embeddable tracial von Neumann algebras are those which admit matrix approximations in a weak sense.
The quantum complexity result announced in \cite{ji2020mipre} implies that not all tracial von Neumann algebras have this property; among Connes-embeddable von Neumann algebras, some---such as free products---have an abundance of matrix approximations which can be constructed probabilistically through random matrix theory, while others---such as amenable von Neumann algebras, property (T) von Neumann algebras, or von Neumann algebras with Cartan subalgebras---have very few matrix approximations.
More precisely, the latter are strongly $1$-bounded in the sense of Jung, or have $1$-bounded entropy $h(M) < \infty$ in the sense of Hayes \cite{Hayes2018}.
Von Neumann algebras with $h(M) = \infty$ enjoy strong indecomposability properties: for instance, they are unable to be decomposed non-trivially as a tensor product, a crossed product, or a join of amenable subalgebras with diffuse intersection; more generally, they cannot be decomposed as a join of subalgebras with finite $1$-bounded entropy.
Using 1-bounded entropy techniques to study the structure of II$_1$ factors (especially, free group factors) has recently been quite fruitful to approach  open problems (see for instance \cite{hayespt, belinschi2022strong, bordenave2023norm, chifan2022nonelementarily}).

Graph products of groups, defined by Green in \cite{Gr90}, are free products of groups indexed by the vertices of a graph, modulo the relations that $\Gamma_v$ and $\Gamma_w$ commute when $v$ and $w$ are adjacent vertices in the graph.
Graph products of von Neumann algebras were introduced by M\l{}otkowski in \cite{Mlot2004} under a different name, then reintroduced and further studied by Caspers and Fima in \cite{CaFi17}.
From a probabilistic viewpoint, graph products give rise to a notion of ``graph independence'', which is a natural way to mix together classical independence and free independence \cite{Mlot2004,SpWy2016}. 

Preservation of Connes-embeddability by graph products was proved by Caspers \cite{casperscep}.
Collins and Charlesworth described how to construct random matrix approximations for a graph product out of given random matrix approximations for the individual algebras $M_v$ \cite{CC2021}.
But despite the matrix approximations being defined by similar techniques as for free products, it was not clear when graph products would have abundant matrix approximations in the sense that $h(M) = \infty$, because the matrix approximations were constructed in a subspace of $M_{N^k}(\bC)$ with much lower dimension than the ambient space.
In this paper we make progress toward classifying when a graph product has $h(M)<\infty$, which can be summarized in the following theorem.
Here items \ref{item: connected graphs intro} and \ref{item: disconnected graphs intro} give a complete characterization of when $h(M) < \infty$ for the case when $M_v$ is diffuse for every $v$, whereas item \ref{item: finite-dimensional case} applies in the much more subtle case when each $M_v$ is finite dimensional.

\begin{introthm}[{Section \ref{subsec: proof of main theorem}}]\label{thm: main theorem intro0}
Let $\cG=(\cV,\cE)$ be a graph with $\#\cV > 1$, and for each $v\in \cV$, let $(M_{v},\tau_{v})$ be a tracial $*$-algebra. Let $(M,\tau)=\gp_{v\in \cG}(M_{v},\tau_{v})$ be their graph product over $\cG$.
\begin{enumerate}[(1)]
    \item Suppose each $M_{v}$ is finite dimensional, and  the trace of every central projection in $M_{v}$ is a rational number. Let $A$ be the $*$-subalgebra of $M$ generated by $\bigcup_{v\in \cV}M_{v}$. If $\beta^{1}_{(2)}(A,\tau)=0$, then $M$ is strongly $1$-bounded.

    \label{item: finite-dimensional case}
    \item If each $M_{v}$ is diffuse and $\cG$ is connected, then $M$ is strongly $1$-bounded (in fact has $1$-bounded entropy at most zero). \label{item: connected graphs intro}
    \item If each $M_{v}$ is diffuse and Connes embeddable, and $\cG$ is disconnected, then $M$ is not strongly $1$-bounded.
   
    \label{item: disconnected graphs intro}
\end{enumerate}
\end{introthm}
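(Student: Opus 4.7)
The three items require substantially different ingredients; item (1) is where the main new work lies, while (2) and (3) should follow from the existing $1$-bounded entropy machinery.

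For item (1), the plan is to combine the two innovations advertised in the abstract. The first step is to establish that the $*$-algebra $A = \langle M_v : v \in \mathcal{V}\rangle$ is \emph{algebraically sofic} in the new sense of the paper: it admits matrix microstates $X_N \in M_N(\mathbb{C})^k$ which are Galois bounded (uniformly bounded in operator norm), whose diagonal projections converge to prescribed elements of the algebra, and which reproduce the correct tracial moments asymptotically. To produce these microstates we embed each finite-dimensional $M_v$ by its left regular representation into a small matrix block---so that the trace vector immediately produces a constant diagonal---and then glue these local models together using the new probabilistic construction of graph independence over the diagonal: uniformly random permutation matrices, drawn independently in appropriate subgroups of $S_N$ and coupled according to the adjacency pattern of $\mathcal{G}$, conjugate the local models so as to realize exact commutation across adjacent vertex pairs and asymptotic free independence across non-adjacent pairs, while preserving the diagonal algebra. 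The second step, once algebraic soficity is in hand, is a Shlyakhtenko-type argument: Galois boundedness together with the constant-diagonal feature yields lower bounds on the Fuglede--Kadison determinants $\det|p(X_N)|$ for matrix polynomials $p$ in the generating tuple, and the hypothesis $\beta^{1}_{(2)}(A,\tau) = 0$ makes those bounds tight enough---via the standard correspondence between the first $L^2$-Betti number, the rank of the canonical derivations, and free entropy dimension---to force the microstate covering numbers to collapse at the critical rate, yielding $h(M) < \infty$.

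The principal obstacle will be engineering the permutation model so that both extremes of the graph relation are realized \emph{simultaneously}: the adjacent pairs must commute exactly, so that they really generate a tensor block (not merely an asymptotic one, because Fuglede--Kadison determinant lower bounds are fragile under small operator-norm perturbations), while the non-adjacent pairs must be asymptotically free over the diagonal. Maintaining uniform operator-norm bounds throughout this probabilistic construction and verifying that the permutation-conjugation invariance preserves the constant-diagonal feature are the two technical hinges.

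Items (2) and (3) then follow by standard routes. For item (2), connectedness of $\mathcal{G}$ with $|\mathcal{V}|>1$ ensures every vertex $v$ has a neighbor $w$, giving diffuse commuting subalgebras $M_v, M_w \subseteq M$ and hence $h(M_v \vee M_w) \leq 0$. Walking along a spanning tree of $\mathcal{G}$ and iteratively applying the Hayes-type inequality ``if $A$ is diffuse and commutes with $B$ then $h(A \vee B) \leq h(B)$'' should propagate the bound $h(\cdot) \leq 0$ to all of $M$. For item (3), a disconnected graph $\mathcal{G} = \mathcal{G}_1 \sqcup \mathcal{G}_2$ yields a free product decomposition $M = M_1 * M_2$ with $M_i = \gp_{v \in \mathcal{G}_i}(M_v,\tau_v)$; each $M_i$ contains a diffuse Connes-embeddable subalgebra, and a free product of two such algebras has infinite $1$-bounded entropy by standard Voiculescu--Hayes estimates, so $M$ is not strongly $1$-bounded.
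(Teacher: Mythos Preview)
Your proposal follows essentially the same route as the paper for all three items, but two points in your sketch of item (1) deserve correction.

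First, the parenthetical ``Galois bounded (uniformly bounded in operator norm)'' misstates the notion. Galois bounded means the matrix entries are algebraic \emph{integers}, the Galois orbit of the matrix under $\Gal(\Qbar/\bQ)$ has uniformly bounded cardinality, and \emph{all Galois conjugates} are uniformly bounded in operator norm. This arithmetic structure is precisely what powers the number-theoretic argument (after Thom) giving the lower bound $\Det^+(A)^{1/N}\geq C^{-d^2+1}$ on pseudodeterminants; mere operator-norm boundedness would not suffice and the entire mechanism for item (1) would collapse.

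Second, ``embed each finite-dimensional $M_v$ by its left regular representation'' does not immediately yield generators with constant diagonals: the diagonal matrix units $e_{ii}$ act on $M_n(\bC)$ by left multiplication with diagonal $(\delta_{ik})_{k,l}$, which is not constant. The paper instead realises $M_n(\bC)\cong L(\widehat{\bZ/n\bZ})\rtimes\bZ/n\bZ$ and takes the unitary generators $u_\chi v_g$; in the GNS representation their matrix entries lie in $\{0\}\cup\{\text{$n$th roots of unity}\}$ and the diagonal expectation agrees exactly with the trace. This is what makes finite-dimensional algebras algebraically sofic.

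With those corrections your outline for (1) matches the paper: algebraic soficity of each vertex algebra, preservation under graph products via the tensor-product/permutation model (exact commutation across edges from disjoint string-sets, asymptotic freeness over the diagonal otherwise), positivity of the Fuglede--Kadison pseudodeterminant of the ``relations'' operator $D_F$ built from free difference quotients, and finally $\dim_{M\overline{\otimes}M^{\op}}\ker D_F(x)=\beta^1_{(2)}(A,\tau)=0$ combined with Shlyakhtenko's theorem. Your sketches of (2) and (3) are the paper's arguments: walk the connected graph applying Hayes's normalizer and join inequalities for (2), and for (3) decompose into a free product of diffuse Connes-embeddable pieces and use additivity $\delta_0(x_1,\ldots,x_l)\geq l>1$.
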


We remark that in (\ref{item: disconnected graphs intro}) we show something stronger: there is a (potentially infinite) tuple $x$ of self-adjoint elements of $M$ so that $W^{*}(x)=M$ and $\delta_{0}(x)>1$. As we show in Theorem~\ref{thm: gp of diffuse}, the second two items in the above theorem can be deduced quickly from the known robust properties of $1$-bounded entropy.
We turn our attention, instead, to the question of strongly $1$-boundedness for graph products of finite dimensional algebras.
This focus motivates the results in part (\ref{item: finite-dimensional case}), which is more subtle; developing the tools for its proof occupies the bulk of this paper, and in the end we are able to prove a more general statement in \ref{thm: gp of as is s1b}.

The final step in our proof is to invoke results of Jung \cite[Theorem 6.9]{JungRegularity} and Shlyakhtenko \cite[Theorem 3.2]{Shl2015}, which apply when an operator naturally arising from ``generators and relations'' has positive Fuglede--Kadison pseudo-determinant.
If each $(M_{v},\tau_{v})$ were a group algebra, it would suffice to note that the graph product is sofic by \cite{graphproductofsoficissofic}: the relation matrix in question is a matrix over the rational group ring, and such matrices have positive Fuglede--Kadison pseudo-determinant by \cite{ElekSzaboDeterminant}.
However, extending this to arbitrary $*$-algebras not arising from groups presents substantial challenges.

Our approach is to introduce a notion of \emph{algebraic soficity} for tracial $*$-algebras inspired by soficity of groups. This notion of algebraic soficity ensures that if $y\in M_{n}(A)$ can be expressed as a matrix of polynomials in $x$ with ``nice'' coefficients (e.g. rational, algebraic, etc.), then $y$ has positive Fuglede--Kadison pseudo-determinant.
Crucially, we establish that this notion of algebraic soficity is closed under graph products.


In order to prove positivity of these Fuglede--Kadison determinants we use our notion of algebraic soficity which, while akin to that of soficity of groups, is not a simple translation of the group case.
One na\"ive approach, sufficient to force the appropriate determinants to be positive, would be to require matrix approximations for our generating tuple $x$ with integer entries, chosen so that polynomials in these matrix approximations asymptotically have constant diagonals.
However, this is impossible even for matrix algebras with matrix units as generators: there are very few projections with integral entries, and such projections do not have constant diagonals unless they are scalars.
For similar reasons, it is too much to ask for tracial $*$-algebras which are only slight modifications of group algebras, such as group algebras twisted by an $S^{1}$-valued $2$-cocycle or group measure-space constructions.

We relax this na\"ive approach by only requiring our matrix approximations to have algebraic integer entries.
In order to obtain a lower bound on the pseudo-determinant of these approximations (and thus a lower bound on the Fuglede--Kadison pseudo-determinant of the limiting operator), we use an algebraic number theory argument analogous to \cite[Theorem 4.3]{ThomDiophantine} which considers the Galois conjugates of a matrix with algebraic integer entries and converts upper bounds on the number of such Galois conjugates and of the operator norm of these conjugates into lower bounds on the pseudo-determinant.
We call matrix approximations \emph{Galois bounded microstates} when they have algebraic integer entries, a uniform  bound on the number of their Galois conjugates, and a uniform  bound on the operator norm of these Galois conjugates (see Definition \ref{def: Galois bounded} for the precise definition).
These are the key to our notion of algebraic soficity: a tracial $*$-algebra $(A, \tau)$ is \emph{algebraically sofic} when it has a generating tuple $x$ which admits Galois bounded microstates with asymptotically constant diagonals.
It turns out that all finite dimensional tracial *-algebras are algebraically sofic (see Theorem~\ref{thm: finite dim algebras are alg sofic}).
From the above discussion, we realize that our proof of Theorem~\ref{thm: main theorem intro0}\ref{item: finite-dimensional case} reduces to the following two results.

 \begin{introthm}[{Theorem \ref{item:FKD positive repeat}}]\label{thm:FKD positive intro}
If $(M,\tau)$ is a tracial von Neumann algebra, and $x$ is a generating tuple for $M$ with a Galois bounded sequence of microstates, then for any matrix polynomial in $x$ with algebraic coefficients, the Fuglede--Kadison pseudo-determinant is positive.
\end{introthm}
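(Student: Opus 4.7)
The plan is to combine an algebraic number theoretic lower bound on the pseudo-determinants of the microstates, in the spirit of Thom's Diophantine argument and Elek--Szab\'o's proof of the determinant conjecture for sofic groups, with a semicontinuity argument to pass to the limit. Write $y = p(x) \in M_k(M)$ and $y_N = p(x_N) \in M_{kn_N}(\bC)$; after clearing denominators in $p$ (which changes $\Delta^+$ by a fixed multiplicative constant) we may assume that $y_N$ has entries in $\overline{\bZ}$ lying in a number field $K_N$ of uniformly bounded degree $D$, and Galois boundedness of $(x_N)$ produces a uniform bound $\|\sigma(y_N)\| \leq C$ over all embeddings $\sigma \colon K_N \hookrightarrow \bC$.

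The crucial step is a uniform lower bound on $\Delta^+(y_N)$. Let $r_N = \rank(y_N)$ and consider the characteristic polynomial of the positive-semidefinite matrix $y_N^* y_N$, whose coefficients are algebraic integers. The coefficient $c_{r_N}$ of $t^{kn_N - r_N}$ equals, up to sign, the product of the nonzero eigenvalues of $y_N^* y_N$, i.e.\ $\Delta^+(y_N)^{2kn_N}$; in particular it is a nonzero algebraic integer in a number field of bounded degree, so its algebraic $\bQ$-norm is a nonzero rational integer, and $\prod_\sigma |\sigma(c_{r_N})| \geq 1$. Each $\sigma(c_{r_N})$ is a sum of $\binom{kn_N}{r_N}$ principal $r_N$-minors of $\sigma(y_N^* y_N)$, and Hadamard's inequality together with operator-norm bounds on $\sigma(y_N)$ and $\sigma(y_N^*)$ (the latter being controlled by the embedding obtained by composing $\sigma$ with complex conjugation) yields $|\sigma(c_{r_N})| \leq \binom{kn_N}{r_N} C^{2r_N}$. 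Assembling these bounds and dividing by $2kn_N$ produces a uniform lower bound $\log \Delta^+(y_N) \geq -L$ for a constant $L$ depending only on $p$, $C$, and $D$.

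To pass to the limit, note that $y_N \to y$ in $*$-distribution gives weak convergence of the spectral distributions $\mu_N$ of $|y_N|$ to the spectral distribution $\mu$ of $|y|$. The uniform lower bound on $\int_{(0,\infty)} \log t \, d\mu_N$, together with the uniform operator norm bound on $y_N$, forces $\mu_N((0, \epsilon)) = O(1/|\log \epsilon|)$ uniformly in $N$; combining this with weak convergence on each interval $(\epsilon, \infty)$ and sending $\epsilon \to 0^+$ then yields $\int_{(0,\infty)} \log t \, d\mu \geq -L$, and hence $\Delta^+(y) > 0$.

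I expect the main technical obstacle to be the interplay between Galois actions and complex conjugation in the second step: because automorphisms of $\overline{\bQ}$ need not commute with complex conjugation, the naive identity $\sigma(y_N^*) = \sigma(y_N)^*$ fails in general, and one has to phrase the Galois boundedness hypothesis so as to control all relevant embeddings into $\bC$ simultaneously (including those obtained by pre- or post-composing with complex conjugation). A secondary but more manageable issue is the lower semicontinuity of pseudo-determinants under weak limits, where the uniform lower bound provides exactly the tightness required to rule out loss of mass to the $\log$-singularity at zero.
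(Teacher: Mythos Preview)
Your proposal is correct and follows essentially the same strategy as the paper: a Thom-style Diophantine bound on the pseudo-determinants of the microstates, combined with upper semicontinuity of $\mu \mapsto \int_{(0,\infty)} \log t\,d\mu$ under weak$^*$ convergence. The paper packages things slightly differently---it first reduces matrix polynomials to scalar ones by enlarging the tuple to include the $x_l \otimes E_{ij}$ and $1 \otimes E_{ij}$ (Proposition~\ref{prop:omnibus GB}\ref{item:tensors of GB}), and its determinant estimate (Lemma~\ref{lem:Bring in the Galois bring in the funk}) works with the Galois orbit of $A^*A$ as a whole rather than the norm of a single coefficient---but the content is the same, and your direct treatment of $p(x_N)\in M_{kn_N}(\cO)$ is a legitimate shortcut. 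Your flagged issue about $\sigma(y_N^*)$ versus $\sigma(y_N)^*$ is exactly what the paper handles via $\widetilde{\sigma}(z)=\overline{\sigma(\overline z)}$; note also that for the semicontinuity step the mass bound $\mu_N((0,\epsilon))=O(1/|\log\epsilon|)$ is not actually needed---since $\int_{(0,\epsilon]}\log t\,d\mu_N\leq 0$ one has $\int_{(\epsilon,C]}\log t\,d\mu_N\geq -L$ directly, and one can pass to the limit and then send $\epsilon\to 0$.
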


The proof of Theorem \ref{thm:FKD positive intro} follows by adapting methods of Thom \cite[Theorem 4.3]{ThomDiophantine}.
In order to prove that graph products of finite dimensional algebras have Galois bounded microstates, we prove the following.

\begin{introthm}[{Theorem \ref{thm: gp of as is as}}]\label{thm: algebraic soficity intro}
Algebraic soficity is preserved by graph products.
\end{introthm}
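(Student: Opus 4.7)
The plan is to construct, for the graph product $M=\gp_{v\in\cG}(M_v,\tau_v)$, a generating tuple together with a deterministic sequence of Galois bounded microstates whose diagonals are asymptotically constant. Take the generating tuple $x$ to be the concatenation of generating tuples $x^{(v)}$ for each vertex algebra, and for each $v$ let $X_N^{(v)}$ be microstates of $x^{(v)}$ witnessing algebraic soficity of $(M_v,\tau_v)$. First I would pass to a common matrix size $N'$ by replacing each $X_N^{(v)}$ with $X_N^{(v)}\otimes I_{N'/d_v(N)}$; tensoring by an identity matrix preserves algebraic integer entries, the bounds on the number and operator norm of Galois conjugates, and asymptotic constancy of the diagonals.

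The core of the construction, following Charlesworth--Collins~\cite{CC2021} but with uniform random permutations in place of Haar unitaries, is to introduce a family of random permutation matrices $\sigma_v \in S_{N'}$ indexed by $v\in\cV$, coupled so that $\sigma_v$ and $\sigma_w$ commute when $\{v,w\}\in\cE$ while otherwise being ``as independent as possible.''  A natural way to achieve this is to sample independent uniform permutations $\sigma_K$ indexed by cliques $K$ of $\cG$ and set $\sigma_v$ to be a product over cliques containing $v$. One then defines
\[
Y_N^{(v)} \;:=\; \sigma_v^{*} \bigl(X_N^{(v)}\otimes I_{N'/d_v(N)}\bigr) \sigma_v.
\]
Since permutation matrices have integer entries and are fixed by every field automorphism of $\overline{\bQ}$, each $Y_N^{(v)}$ has algebraic integer entries, and the same uniform bounds on the number and operator norm of Galois conjugates apply as for $X_N^{(v)}$.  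It then remains to verify that, with probability tending to one, the tuple $(Y_N^{(v)})_{v\in\cV}$ converges in distribution to $x$ in $(M,\tau)$ and has asymptotically constant diagonals on all polynomial evaluations; a standard concentration/derandomization step extracts a deterministic sequence realizing these properties.

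The main obstacle is proving asymptotic graph-independence \emph{over the diagonal subalgebra} $D_{N'}$ for this random-permutation construction, in analogy with the over-the-diagonal permutation independence results of~\cite{ACDGM2021} and the unitary analogue in~\cite{CC2021} (indeed, this is flagged in the introduction as one of the two new ideas of the paper). Granted this, the proof concludes as follows. The hypothesis that each $E_{D_{N'}}\bigl(X_N^{(v)}\otimes I_{N'/d_v(N)}\bigr) \to \tau_v(x^{(v)})\,I_{N'}$ forces the $D_{N'}$-valued distribution of the $Y_N^{(v)}$ to collapse, in the limit, to a scalar-valued distribution; so asymptotic graph-independence over $D_{N'}$ recovers exactly the graph product moments of $M$. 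Moreover, the diagonal of any non-commutative polynomial in the $Y_N^{(v)}$ is by definition $E_{D_{N'}}$ applied to that polynomial, which by the same collapse is asymptotically a multiple of $I_{N'}$. The combinatorial heart of the missing step is a careful analysis of expected traces of alternating products indexed by reduced words in the graph product, controlled by moment estimates for uniform random permutations.
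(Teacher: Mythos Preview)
Your overall strategy matches the paper's: start from Galois bounded microstates for each vertex algebra, inflate to a common size, conjugate by random permutations in the spirit of \cite{CC2021}, invoke asymptotic graph independence over the diagonal (the paper's Theorem~\ref{thm: permutation model}), and then use asymptotic constancy of the vertex diagonals to collapse the $D_{N'}$-valued distribution to the scalar one. The final two paragraphs of your outline are essentially what the paper does.

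However, there is a genuine gap in your model. You place all microstates in a single $M_{N'}(\bC)$ and conjugate by permutations $\sigma_v$ built as products of independent $\sigma_K$ over cliques $K\ni v$. Even granting that this makes $\sigma_v$ and $\sigma_w$ commute when $\{v,w\}\in\cE$ (which is itself unclear, since arbitrary permutations of $[N']$ do not commute), commuting \emph{conjugators} do not force the conjugated matrices to commute: $\sigma_v^*X_v\sigma_v$ and $\sigma_w^*X_w\sigma_w$ are two generic elements of the same $M_{N'}(\bC)$ and there is no mechanism that makes $[\,\sigma_v^*X_v\sigma_v,\,\sigma_w^*X_w\sigma_w\,]\to 0$. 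Thus your microstates cannot converge in law to the graph product, which requires exact commutation of adjacent vertex algebras.

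The paper resolves this by keeping the full Charlesworth--Collins tensor structure rather than collapsing to a single $M_{N'}(\bC)$: one works inside $\bigotimes_{s\in\cS}M_N(\bC)$ for a set of ``strings'' $\cS$ with $\cS_c\cap\cS_{c'}=\varnothing$ iff $(c,c')\in\cE$, and the microstates for color $c$ are placed in the tensor factors indexed by $\cS_c$ (tensored with identities elsewhere). Commutation for adjacent colors is then automatic, because the nontrivial tensor legs are disjoint. The random permutations $\Sigma_c$ are likewise taken in $\bigotimes_{\cS_c}M_N(\bC)$ and are genuinely independent across colors; there is no need for a clique coupling. Once you adopt this model, the rest of your sketch (Galois boundedness survives tensoring with identities and permutation conjugation; asymptotic graph independence over the diagonal plus asymptotically constant vertex diagonals gives both convergence in law and asymptotically constant diagonals for all polynomials; derandomize at the end) is exactly the paper's argument.
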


Theorem \ref{thm: algebraic soficity intro} is just a restatement of Theorem~\ref{thm: gp of as is as}, which is proved in Section~\ref{sec: gp of as}.
As a consequence of Theorems \ref{thm: algebraic soficity intro} and \ref{thm:FKD positive intro}, we can replace ``finite dimensional" in Theorem \ref{thm: main theorem intro0} (\ref{item: finite-dimensional case}) with ``algebraically  sofic," under a technical condition on traces (see Theorem \ref{thm: gp of as is s1b} for more details).

Sofic groups themselves have seen numerous applications in recent years: their Bernoulli shift actions can be completely classified (by \cite{Bow, BowenOrn, SewardOrn}); they are known to satisfy the determinant conjecture \cite{ElekSzaboDeterminant}(a conjecture arising in the theory of $L^{2}$-invariants) and consequently their $L^{2}$-torsion is well-defined \cite{LuckBook}; they are known to admit a version of L\"{u}ck approximation \cite{ElekSzaboDeterminant}; they satisfy Gottschalk's surjunctivity conjecture \cite{GromovSurjun}; and they are known to satisfy Kaplansky's direct finiteness conjecture \cite{ElekSzaboDF}.
In fact, any group for which one of these properties is known is also known to be sofic; it is a large open question whether or not every group is sofic. We refer the reader to \cite{LewisICM} for further applications of sofic groups, particularly to ergodic theory.

We expect that our new notion of algebraic soficity will have many similar applications in the theory of von Neumann algebras. Motivated by our work, and using our new notion of algebraic soficity, we make the following conjecture.

\begin{introconj}\label{alg sofic and L2b conj}
Let $(M,\tau)$ be a tracial von Neumann algebra.
Assume that $M$ has a weak$^{*}$-dense, finitely presented, algebraically sofic, unital $*$-subalgebra $A$.
Then $(M,\tau)$ is strongly $1$-bounded if and only if $\beta^{1}_{(2)}(A,\tau)=0$.
\end{introconj}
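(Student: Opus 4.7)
The conjecture extends the main theorem of this paper from graph products of finite-dimensional algebras to arbitrary finitely presented algebraically sofic algebras, strengthening it to a biconditional characterization. The plan is to address the two implications separately: the direction $\beta^{1}_{(2)}(A,\tau)=0 \Rightarrow M \text{ strongly }1\text{-bounded}$ should follow from the machinery already developed in this paper, while the converse is more delicate and reduces to a microstates analogue of Shlyakhtenko's $L^{2}$-Betti number inequality.

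For the forward direction, the strategy parallels Theorem \ref{thm: gp of as is s1b}. Since $A$ is finitely presented, fix a presentation $A = \bC\langle y_{1},\ldots,y_{n} \mid r_{1},\ldots,r_{k}\rangle$; after replacing each $y_{i}$ by its self-adjoint and anti-self-adjoint parts and closing the relations under the $*$-operation, we may assume the $y_{i}$ are self-adjoint and the $r_{j}$ are $*$-polynomials with algebraic (in fact rational) coefficients. Let $x=(x_{1},\ldots,x_{n})$ be the images in $M$. Algebraic soficity of $(A,\tau)$ then provides a sequence of Galois bounded microstates for $x$. The finite presentation produces a matrix polynomial $T(x)$ with algebraic coefficients (such as the Jacobian of $r_{1},\ldots,r_{k}$), whose spectral data controls the first $L^{2}$-Betti number of $A$. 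By Theorem \ref{thm:FKD positive intro}, the Fuglede--Kadison pseudo-determinant of $T(x)$ is strictly positive. Combining this positivity with the hypothesis $\beta^{1}_{(2)}(A,\tau)=0$ and invoking the strong $1$-boundedness criterion of Jung \cite[Theorem 6.9]{JungRegularity} and Shlyakhtenko \cite[Theorem 3.2]{Shl2015} then yields that $M=W^{*}(x)$ is strongly $1$-bounded.

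For the converse direction, the natural approach is to establish a microstates lower bound of the form
\[ \delta_{0}(x) \geq 1 + \beta^{1}_{(2)}(A,\tau) - \beta^{0}_{(2)}(A,\tau) \]
for any finite generating tuple $x$ of an algebraically sofic algebra $A$. Strong $1$-boundedness of $M$ forces $\delta_{0}(x)\leq 1$, and $\beta^{0}_{(2)}(A,\tau)=0$ whenever $M$ is diffuse (the only nontrivial case), so such an inequality would yield $\beta^{1}_{(2)}(A,\tau)=0$. The analogue of this inequality for the non-microstates free entropy dimension $\delta^{*}$ is a theorem of Mineyev--Shlyakhtenko and Connes--Shlyakhtenko, but the microstates version is open in general.

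This converse is the main obstacle to the conjecture, since it touches on the long-standing open question of agreement between microstates and non-microstates free entropy dimensions. One would need to adapt the derivation-based resolvent arguments of \cite{Shl2015} to the microstates framework, leveraging the effective algebraic control afforded by Galois bounded microstates --- for instance, by using the finite presentation to construct efficient covers of microstate spaces with asymptotically controlled dimension. Both finite presentation and algebraic soficity appear essential: the former bounds the rank of the relevant relation matrix and makes the $L^{2}$-Betti numbers computable from a finite Jacobian, while the latter provides microstates with the arithmetic rigidity needed to transport a Shlyakhtenko-type derivation argument from the non-microstates to the microstates side.
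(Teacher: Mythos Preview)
This statement is a \emph{conjecture}, not a theorem, and the paper does not prove it. Immediately after stating it, the paper remarks that the implication $\beta^{1}_{(2)}(A,\tau)=0 \Rightarrow$ strongly $1$-bounded follows from \cite[Theorem~2.5]{Shl2015} together with the machinery of this paper (pointing to the proof of Theorem~\ref{thm: main theorem intro0}(\ref{item: finite-dimensional case})), and that the difficulty lies entirely in the converse. Your proposal is therefore not a proof but a strategic discussion, and as such it aligns well with the paper's own assessment: you correctly isolate the forward direction as tractable via Jung--Shlyakhtenko plus Theorem~\ref{thm:FKD positive intro}, and you correctly flag the converse as open and tied to the microstates/non-microstates free entropy dimension problem. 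The paper adds one datum you do not mention: partial progress in \cite{DimaLowerEstimates}, where the obstruction is phrased as the problem of exponentiating a derivation to produce a one-parameter family of deformations moving in a ``free direction.''

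There is one genuine gap in your sketch of the forward direction. You assert that after replacing generators by real and imaginary parts one ``may assume the $r_{j}$ are $*$-polynomials with algebraic (in fact rational) coefficients.'' Finite presentation alone does not give this: an arbitrary finitely presented $*$-algebra need not admit a presentation with coefficients in $\Qbar$. In the paper this reduction is carried out in Lemma~\ref{lem: finite presentation lemma}, and it requires the additional hypothesis that $\tau(P(x))\in\Qbar$ for every $P\in\Qbar\langle S_{1},\dots,S_{r}\rangle$---precisely the extra assumption appearing in Theorem~\ref{thm: gp of as is s1b}. Without this, the Jacobian $D_{F}(x)$ need not have algebraic entries and Theorem~\ref{thm:FKD positive intro} does not apply. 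So even the ``easy'' direction, as literally stated in the conjecture, is not quite established by the arguments in the paper; the paper's claim that it follows from \cite{Shl2015} should be read with this caveat in mind, or with an implicit assumption that the presentation can be so arranged.
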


The fact that if $\beta^{1}_{(2)}(A,\tau)=0$ and $(A,\tau)$ is algebraically sofic, then $(M,\tau)$ is strongly $1$-bounded is a consequence of \cite[Theorem 2.5]{Shl2015} (see e.g. the proof of Theorem \ref{thm: main theorem intro0} (\ref{item: finite-dimensional case}) in Section \ref{subsec: proof of main theorem}).
So the difficulty is in establishing the converse. Partial progress on this has already been made in \cite{DimaLowerEstimates}, and as discussed there an inherent part of the difficulty is in exponentiating a derivation to get a one-parameter family of deformations of $M$ which ``move in a free direction". This conjecture is already interesting to investigate when $(M,\tau)$ is the graph product of finite dimensional tracial von Neumann algebras, and $A$ is the $*$-algebra generated by the vertex algebras.

\begin{remark}
The problem of studying the first $L^2$-Betti numbers for graph products of finite groups has been studied extensively in \cite{DDJO, DO, DO2}. In particular the authors specify that there are algorithms to compute the first $L^2$ Betti numbers for certain graph products of finite groups.  Combining this with \cite{Shl2015} should give examples of strongly $1$-bounded group von Neumann algebras. One would expect that these algorithms would generalize to the setting of finite dimensional $*$-algebras, in which case one could use them in combination with Theorem~\ref{thm: main theorem intro0}(1) to obtain examples of strongly 1-bounded von Neumann algebras not coming from groups.
\end{remark}

One special case of Conjecture \ref{alg sofic and L2b conj} that is worth studying is the case where $M=L(\Gamma)$ is the von Neumann algebra of a group $\Gamma$. In this case, we would expect that if $\Gamma$ is sofic, then $L(\Gamma)$ is strongly $1$-bounded if and only if $\beta^{1}_{(2)}(\Gamma)=0$. This is of particular interest for graph products of groups, because of the aforementioned results that give an algorithmic approach to computing their first $L^{2}$-Betti number. For the special setting of group von Neumann algebras, Conjecture \ref{alg sofic and L2b conj} would follow immediately if the following problem has an affirmative answer.

\begin{problem}\label{prob: free product problem}
Suppose that $\Gamma$ is a group with positive first $L^{2}$-Betti number. Is it true that $L(\Gamma)$ has a finite index subalgebra which decomposes as a free product of two tracial von Neumann algebras $M_{1},M_{2}$?
\end{problem}
Note that if a  tracial von Neumann algebra is a nontrivial free product up to finite index, then it has no Cartan subalgebras  (\cite{Jung2007}, \cite{CartanAFP}). Note also that absence of Cartan for various subfamilies of groups with positive first $L^{2}$-Betti number has been obtained in the literature (see  for instance \cite{ChifanSinclair, CSUProduct, AdrianCompactActions, PopaVaesFree, PopaVaesHyp, SinclairGuassian}), using deformation/rigidity theory.
An affirmative answer to Problem \ref{prob: free product problem} would of course be a surprising structural property of group von Neumann algebras with positive first $L^{2}$-Betti number. However it is not a possibility that should be ruled out. 

\subsection*{Acknowledgements}  We thank Dimitri Shlyakhtenko for  lively discussions; and we thank IPAM and the Lake Arrowhead Conference Center for hosting the Quantitative Linear Algebra long program second reunion conference, where some of these discussions took place.  We thank the American Institute of Mathematics SQuaRES program for hosting us for a week each in April 2022 and April 2023 to collaborate on this project. BH was supported by the NSF grant DMS-2000105. IC was supported by long term structural funding in the form of a Methusalem grant from the Flemish Government. DJ was supported by postdoctoral fellowship from the National Science Foundation (DMS-2002826). BN was supported by NSF grant DMS-1856683.

\tableofcontents

\section{Preliminaries}







\begin{defn}
A \emph{(simple undirected) graph} is a pair $\cG = (\cV, \cE)$ where $\cV$ is a finite set consisting of \emph{vertices}, and $\cE \subset \cV \times \cV$ is a set of \emph{edges}.
We insist that $\cE$ is symmetric (i.e., $(x, y) \in \cE$ if and only if $(y, x) \in \cE$) and non-reflexive (i.e., $(x, x) \notin \cE$ for any $x \in \cV$; that is, we do not allow self-loops).

An \emph{(undirected) multigraph} is the same except $\cE$ is a multiset, allowing parallel edges.
\end{defn}


\subsection{Graph products}

We will now define the graph product of von Neumann algebras, and some important related notions.
Given a graph $\cG = (\cV, \cE)$ and a collection of finite tracial von Neumann algebras $(M_v, \tau_v)$ for each $v \in \cV$,
the graph product will be constructed as finite von Neumann algebra containing a copy of each $M_v$ in such a way that $M_v$ and $M_{v'}$ are in tensor product position if $(v, v') \in \cE$, and in free position otherwise.

\begin{defn} \label{def: G reduced word}
Let $\cG = (\cV, \cE)$ be a graph.
We say that a word $(v_1, \ldots, v_n) \in \cV^n$ is \emph{$\cG$-reduced} provided that whenever $i < k$ are such that $v_i = v_k$, there is some $j$ with $i < j < k$ so that $(v_i, v_j) \notin \cE$.
\end{defn}

If $(v_1, \ldots, v_n) \in \cV^n$ is such a word and $x_i \in M_{v_i}$, then saying the word is \emph{not} $\cG$-reduced is exactly saying that two $x_i$'s from the same algebra could be permuted next to each other and multiplied, giving a shorter word.

\begin{defn}
Let $\cG = (\cV, \cE)$ be a graph, $(M, \tau)$ be a tracial von Neumann algebra, and for each $v \in \cV$ let $M_v \subseteq M$ be a von Neumann sub-algebra.
Then the algebras $(M_v)_{v \in \cV}$ are said to be \emph{$\cG$-independent} if: $M_v$ and $M_w$ commute whenever $(v, w) \in \cE$; and whenever $(v_1, \ldots, v_n)$ is a $\cG$-reduced word and $x_1, \ldots, x_n \in M$ are such that $x_i \in M_{v_i}$ and $\tau(x_i) = 0$, we have $\tau(x_1\cdots x_n) = 0$.

Conversely, given a collection $(M_v, \tau_v)$ of tracial von Neumann algebras, their \emph{graph product} $(M, \tau) = \gp_{v \in \cG} (M_v, \tau_v)$ is the von Neumann algebra generated by copies of each $M_v$ which are $\cG$-independent, so that $\tau|_{M_v} = \tau_v$.
(That the graph product exists and is unique was shown in \cite{Mlot2004}).
When the trace is clear from context, we may write simply $M = \gp_{v \in \cG} M_v$.
\end{defn}
Notice that if $\cG = (\cV, \emptyset)$ then $\gp_{v \in \cG} M_v = \Asterisk_{v \in \cV} M_v$; on the other hand, if $\cG$ is a complete graph, then $\gp_{v \in \cG} M_v = \bigotimes_{v \in \cV} M_v$.

\subsection{Laws in tracial von Neumann algebras}
A tracial von Neumann algebra is a pair $(M,\tau)$ where $M$ is a von Neumann algebra and $\tau\colon M\to \bC$ is a faithful, normal, tracial state. If $a\in M$ is a normal element, we let $\mu_{a}$ be the Borel probability measure supported on the spectrum of $a$ defined by
\[\mu_{a}(E)=\tau(1_{E}(a)) \textnormal{ for all Borel $E\subseteq \bC$}.\]
We then necessarily have that
\[\tau(f(a))=\int f\,d\mu_{a}\]
for all complex-valued, bounded Borel functions $f$ defined on the spectrum of $a$.

Given an integer $r\geq 1$, we let $\bC\ang{T_{1},T_{1}^{*},T_{2},\cdots,T_{r},T_{r}^{*}}$ be the algebra of noncommutative $*$-polynomials in $r$-variables (i.e. the universal $*$-algebra in $r$-variables).
Given a $*$-algebra $A$ and a tuple $a=(a_{1},\cdots,a_{r})\in A^{r}$, and $P\in \bC\ang{T_{1},T_{1}^{*},T_{2},\cdots,T_{r},T_{r}^{*}}$, we use $P(a)$ for the image of $P$ under the unique $*$-homomorphism $\bC\ang{T_{1},T_{1}^{*},T_{2},\cdots,T_{r},T_{r}^{*}}\to A$ which sends $T_{j}$ to $a_{j}$. For later use, if \[P=(P_{ij})_{1\leq i\leq m,1\leq j\leq n}\in M_{m,n}(\bC\ang{T_{1},T_{1}^{*},T_{2},T_{2}^{*},\cdots,T_{r},T_{r}^{*}})\] we define $P(x)\in M_{m,n}(A)$ by
\[(P(x))_{ij}=P_{ij}(x).\]

If $(M,\tau)$ is a tracial von Neumann algebra, and $x\in M^{r}$ is a tuple, we define its \emph{law} $\ell_{x}$ to be the linear functional
\begin{align*}
\ell_{x}\colon\bC\ang{T_{1},T_{1}^{*},T_{2},\cdots,T_{r},T_{r}^{*}} &\to \bC \\
P &\mapsto\tau(P(x)).
\end{align*}
If $n\in \bN$ and $A\in M_{n}(\bC)$ then we let $\ell_{A}$ be the law of $A$ with respect to the normalized  tracial state $\tr_{n}$ on $M_{n}(\bC)$, namely
\[\tr_{n}(A)=\frac{1}{n}\sum_{j=1}^{n}A_{jj}.\]
Suppose we are given a sequence $(M_{n},\tau_{n})$ of tracial von Neumann algebras, and $a_{n}\in M_{n}^{r}$. If $(M,\tau)$ is a tracial von Neumann algebra and $a\in M^{r}$ we say that $\ell_{a_{n}}\to \ell_{a}$ if for all $P\in \bC\ang{T_{1},T_{1}^{*},T_{2},\cdots,T_{r},T_{r}^{*}}$ we have
\[\ell_{a}(P)=\lim_{n\to\infty}\ell_{a_{n}}(P).\]

Laws are spectral measures are related by the following fact: suppose we are given
\begin{itemize}
    \item $(M_{n},\tau_{n})$ are tracial von Neumann algebras,
    \item a $C>0$ and an integer $r\geq 1$
    \item a sequence $a_{n}\in (M_{n})_{s.a.}^{r}$ with $\|a_{n}\|\leq C$.
    \item a tracial von Neumann algebra $(M,\tau)$ and $a\in M_{s.a.}^{r}$
\end{itemize}
Then $\ell_{a_{n}}\to_{n\to\infty}\ell_{a}$ in law if and only if for every self-adjoint $P=P^{*}\in \bC\ang{T_{1},T_{1}^{*},\cdots,T_{r},T_{r}^{*}}$ we have $\mu_{P(a_{n})}\to\mu_{P(a)}$ weak$^{*}$. Moreover, if $r=1$ these conditions are equivalent to saying that $\mu_{a_{n}}\to \mu_{a}$ weak$^{*}$. The proof of this fact is an exercise in applying the Stone-Weierstrass theorem.

If $(M,\tau)$ is a tracial von Neumann algebra, and $x\in M_{m,n}(M)$, we define the \textbf{Fuglede--Kadison pseudo-determinant of $x$} by
\[\Det_{M}^{+}(x)=\exp\left(n\int_{(0,\infty)} \log t\,d\mu_{|x|}(t)\right),\]
where $|x|=(x^{*}x)^{1/2}$, and $\mu_{|x|}$ is the spectral measure with respect to $\tr_{n}\otimes \tau$.  Here we are following the usual convention that $\exp(-\infty)=0$.

\subsection{Galois theory} \label{subsec: Galois theory}
We fix some notation and recalling some of the fundamental concepts of Galois theory, specific to algebraic field extensions of $\bQ$.
Let $\Qbar$ be the algebraic numbers in $\bC$, this is a field by \cite[Corollary 19 in Section 13.2]{DummitUndFoote}
We will write $\cO$ for the algebraic integers in $\bC$; recall that $x\in \cO$ if there is a monic $p\in \bZ[T]$ so that $p(x)=0$.

The \textbf{absolute Galois group of $\bQ$} is defined to be the group $\Gal(\Qbar/\bQ)$ of all field automorphisms of $\Qbar$ (note that such automorphisms automatically fix $\bQ$).
Each $x \in \Qbar$ has finite orbit $\Gal(\Qbar/\bQ)\cdot x := \set{\sigma(x) \colon \sigma \in \Gal(\Qbar/\bQ)}$; if we equip these sets with their discrete topologies, then $\prod_{x \in \Qbar} (\Gal(\Qbar/\bQ)\cdot x)$ is compact by Tychonoff's Theorem, and contains $\Gal(\Qbar/\bQ)$.
Since $\Gal(\Qbar/\bQ)$ is closed in this topology, it is a compact group.
Note that a sequence $\sigma_{n}\in \Gal(\Qbar/\bQ)$ converges to $\sigma\in \Gal(\Qbar/\bQ)$ if for every $x\in \Qbar$ we have $\sigma_{n}(x)=\sigma(x)$ for all sufficiently large $n$.

Though we will not need it, we remark to the reader that the usual Galois correspondence between subgroups and subfields extends to this setting.
Namely, the \emph{closed} normal subgroups are in natural bijection with the Galois extensions of $\bQ$, via the correspondence that sends a closed, normal subgroup $H$ of $\Gal(\Qbar/\bQ)$ to $\Fix_{H}(\Qbar)=\{x\in \Qbar:\sigma(x)=x\textnormal{ for all $\sigma\in H$}\}$.
This correspondence naturally induces an isomorphism $\Gal(\Qbar/\bQ)/H\cong \Gal(\Fix_{H}(F)/\bQ)$.
In particular, if $[\Gal(\Qbar/\bQ):H]<+\infty$, then $\Fix_{H}(\Qbar)$ is a finite Galois extension with degree $[\Gal(\Qbar/\bQ):H]$.

We remind the reader here some of the core results of Galois theory and algebraic number theory, which we will use in Section \ref{sec: GB and alg sofic}.
\begin{enumerate}
    \item If $x\in\Qbar$, then $x\in \bQ$ if and only if $\sigma(x)=x$ for all $\sigma\in \Gal(\Qbar/\bQ)$ (see \cite[Theorem 1.2]{LangBook}),
    \item the algebraic integers form a subring of $\Qbar$ (see \cite[Corollary 24 in Section 15.3]{DummitUndFoote}),
    \item $\cO\cap \bQ=\bZ$  (see \cite[Proposition 28 in Section 15.3]{DummitUndFoote}).
\end{enumerate}

\section{Galois bounded microstates and algebraic soficity}\label{sec: GB and alg sofic}

In this section we introduce the concepts of Galois bounded sequences of microstates and algebraic soficity.
The motivation is to find an analogue of soficity which is better adapted to $*$-algebras not necessarily arising from groups, which will still be sufficient to give us bounds on certain Fuglede--Kadison pseudodeterminants of operators arising from such algebras.

The generators of a sofic group admit microstates among the permutation matrices, where all the entries are $0$ or $1$.
This suffices to prove
that their group algebras satisfy
the determinant conjecture \cite{ElekSzaboDeterminant} (in turn implying that $L^{2}$-torsion of modules over them is well-defined \cite{LuckBook}), as well as L\"{u}ck approximation \cite{ElekSzaboDeterminant}. They are thus of fundamental importance in the study of $L^{2}$-invariance.
We will see that the same sort of control can be obtained when a $*$-algebra has generators admitting microstates whose entries, rather than being integers, are algebraic integers all living in a fixed finite extension of $\bQ$.
We make these idea precise in Definitions~\ref{def: Galois bounded} and \ref{def:alg sofic}.

As motivating examples, we show below that $M_n(\bC)$ is algebraically sofic (despite not being a group von Neumann algebra), as is  $L(\Gamma)$ for any sofic group $\Gamma$.

\subsection{Galois bounded microstates and the Fuglede--Kadison determinant}

Given $A\in M_N(\Qbar)$ and $\sigma\in \Gal(\Qbar/\bQ)$, we write $\sigma(A)$ to mean the matrix obtained by applying $\sigma$ to each entry of $A$. For $\sigma\in \Gal(\Qbar/\bQ)$, we let $\widetilde{\sigma}\in \Gal(\Qbar/\bQ)$ be given by $\widetilde{\sigma}(z)=\overline{\sigma(\overline{z})}$.
Note that if $A\in M_{N}(\Qbar)$, then \begin{equation}\label{eqn: adjoint Galois eqn}
\sigma(A^{*})=\widetilde{\sigma}(A)^{*}.
\end{equation}
This will be used frequently in this section. For a matrix $A\in M_{N}(\Qbar)$, we use
\[\Gal(\Qbar/\bQ)\cdot A=\{\sigma(A):\sigma\in \Gal(\Qbar/\bQ)\}.\]
The following lemma allows us to use number theory to obtain lower estimates on pseudo-determinants of finite dimensional matrices. This lemma will then motivate a special type of microstates approximation sequence whose existence implies positivity of Fuglede--Kadison pseudo-determinants.

\begin{lem}\label{lem:Bring in the Galois bring in the funk}
Suppose $A\in M_{N}(\mathcal{O})$. Set
\[
C=\max_{\sigma\in \Gal(\Qbar/\bQ)}\|\sigma(A)\|,
\]
\[
d=\#(\Gal(\Qbar/\bQ)\cdot A)
\]
Then
\[
\Det^{+}(A)^{1/N}\geq C^{-d^{2}+1}.
\]
\end{lem}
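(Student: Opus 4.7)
My plan is to express $\Det^{+}(A)^{2}$ as the absolute value of a single algebraic integer---specifically, a distinguished coefficient of the characteristic polynomial of $B := A^{*}A$---and then bound it below by a Galois-theoretic orbit-product argument in the spirit of Thom's Diophantine method. The first step is to note that $B \in M_{N}(\mathcal{O})$ (because $\mathcal{O}$ is a ring closed under complex conjugation, since complex conjugation restricts to an element of $\Gal(\Qbar/\bQ)$), to set $r = \rank B$, and to write the characteristic polynomial as $\det(tI - B) = t^{N-r} q(t)$ with $q(t) = \prod_{i=1}^{r}(t - \lambda_{i})$ for the positive eigenvalues of $B$. Then the coefficient $c$ of $t^{N-r}$ in $\det(tI - B)$ satisfies $|c| = \prod_{i}\lambda_{i} = \Det^{+}(B) = \Det^{+}(A)^{2}$, and $c \in \mathcal{O}$ because characteristic-polynomial coefficients are integer polynomials in the matrix entries.

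The second step is to control each Galois conjugate $|\sigma(c)|$. Since $\sigma(\det(tI - B)) = \det(tI - \sigma(B))$ and rank is preserved by field automorphisms (as the rank of a matrix is witnessed by the non-vanishing of certain minors, a condition stable under $\sigma$), I would conclude that $\sigma(B)$ has exactly $r$ nonzero eigenvalues $\mu_{1}, \ldots, \mu_{r}$ and that $|\sigma(c)| = \prod_{i}|\mu_{i}| \leq \|\sigma(B)\|^{r}$. Invoking \eqref{eqn: adjoint Galois eqn} to write $\sigma(B) = \widetilde{\sigma}(A)^{*}\sigma(A)$ yields $\|\sigma(B)\| \leq C^{2}$, so $|\sigma(c)| \leq C^{2r} \leq C^{2N}$, where the final inequality uses $C \geq 1$. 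The bound $C \geq 1$ is essentially automatic when $A \neq 0$: any nonzero entry $A_{ij} \in \mathcal{O}$ must have some Galois conjugate of absolute value at least $1$, since otherwise its algebraic norm would be a rational integer in the open unit disk, forcing $A_{ij} = 0$.

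The third step is to bound the orbit size $\#\Gal(\Qbar/\bQ) \cdot c$ by $d^{2}$. The key observation is that $\sigma(B) = \widetilde{\sigma}(A)^{*}\sigma(A)$ depends only on the pair $(\sigma(A), \widetilde{\sigma}(A))$; since $\sigma \mapsto \widetilde{\sigma}$ is a bijection of $\Gal(\Qbar/\bQ)$, each coordinate takes at most $d$ values, so the orbit of $B$ (and hence of $c$) has at most $d^{2}$ elements. To conclude, the product $\prod_{c' \in \Gal \cdot c} c'$ is Galois-invariant, hence an element of $\mathcal{O} \cap \bQ = \bZ \setminus \{0\}$ with absolute value at least $1$; rearranging gives
\[\Det^{+}(A)^{2} = |c| \geq \prod_{c' \in \Gal \cdot c,\, c' \neq c} |c'|^{-1} \geq C^{-2N(d^{2}-1)},\]
and taking the $2N$-th root yields the stated bound.

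The step I expect to require the most care is the orbit-size estimate. Because the adjoint is conjugate-linear rather than linear, applying $\sigma$ to $A^{*}$ produces $\widetilde{\sigma}(A)^{*}$ rather than $\sigma(A)^{*}$; this asymmetry is precisely what allows the orbit of $A^{*}A$ to be as large as $d^{2}$ rather than $d$, and it is what forces the exponent $d^{2}$ (rather than $d$) in the final inequality. Tracking this asymmetry---and the corresponding factor of $C^{2}$ (rather than $C$) in $\|\sigma(B)\|$---is the main source of subtlety in an otherwise standard number-theoretic computation.
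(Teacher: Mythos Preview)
Your proposal is correct and follows essentially the same approach as the paper: both arguments extract the product of the nonzero eigenvalues of $A^{*}A$ as (up to sign) a coefficient of its characteristic polynomial, use the identity $\sigma(A^{*}A)=\widetilde{\sigma}(A)^{*}\sigma(A)$ to bound the Galois orbit by $d^{2}$ and each conjugate by $C^{2N}$, and then invoke the fact that a Galois-invariant nonzero algebraic integer lies in $\bZ\setminus\{0\}$. The only cosmetic difference is that the paper packages the orbit product by forming the direct sum $\bigoplus_{S\in\Omega}S$ and reading off $q(0)$ from its characteristic polynomial, whereas you track the Galois orbit of the scalar $c$ directly; your version is arguably a bit more streamlined but not materially different.
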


\begin{proof}
Let $\Omega=\Gal(\Qbar/\bQ)\cdot (A^*A)$.
Following \cite[Theorem 4.3]{ThomDiophantine}, set
\[
B=\bigoplus_{S\in \Omega}S.
\]
Then the characteristic polynomial of $B$ is
\[k_{B}=\prod_{S\in \Omega}k_{S},\]
where $k_S$ is the characteristic polynomial of $S$.
Let $r\in \bN\cup\{0\}$ be such that $k_{A^{*}A}(T)=T^{r}p$, where $p\in \mathcal{O}[T]$ has $p(0)\ne 0$. Then, for $S\in\Omega$, we have $k_{S}(T)=T^{r}p_{S}(t)$ with $p_{S}\in \mathcal{O}[T]$, and $p_{S}(0)\ne 0$. Hence
\[k_{B}(T)=T^{r\#\Omega}\prod_{T\in\Omega}p_{S}(T).\]
Set
\[
q=\prod_{S\in\Omega}p_{S}.
\]
Since $k_{B}$ is invariant under $\Gal(\Qbar/\bQ)$ and has algebraic integer coefficients, we know that its coefficients are in $\bQ\cap \mathcal{O}=\bZ$. It follows that $q\in \bZ[T]$ as well. Further $q(0)\ne 0$. Thus $q(0)\in \bZ\setminus\{0\}$ so that
\[1\leq |q(0)|=\Det^{+}(A)^{2}\prod_{S\in\Omega\setminus\{A^{*}A\}}|p_{S}(0)|.\]
For $S\in\Omega$, we know that $p_{S}(0)$ is the product of the nonzero eigenvalues of $S$. For $\sigma\in \Gal(\Qbar/\bQ)$, we define $\widetilde{\sigma}\in \Gal(\Qbar/\bQ)$ by $\widetilde{\sigma}(z)=\overline{\sigma(\overline{z})}$. Then,
\[\|\sigma(A^{*}A)\|=\|\widetilde{\sigma}(A)^{*}\sigma(A)\|\leq C^{2}.\]
This estimate implies that $|p_{S}(0)|\leq C^{2N}$ for every $S\in\Omega$.
So
\[1\leq \Det^{+}(A)^{2}C^{2(\#\Omega-1)N}.\]
Moreover,
(\ref{eqn: adjoint Galois eqn}) implies that
\[\Omega\subseteq\{(\sigma(A))^{*}\phi(A):\sigma,\phi\in \Gal(\Qbar/\bQ)\}\subseteq \{S_{1}^{*}S_{2}:S_{1},S_{2}\in \Gal(\Qbar/\bQ)\cdot A\}.\]
Thus $\#\Omega\leq d^{2}$,
and this completes the proof.
\end{proof}
The preceding lemma suggests the following definition.

\begin{defn} \label{def: Galois bounded}
Let $n(k)$ be a sequence of natural numbers. Let $\Gal(\Qbar/\bQ)$ be the absolute Galois group of $\bQ$. We say that $X^{(k)}\in M_{n(k)}(\bC)$ is \textbf{Galois bounded} if
\begin{itemize}
    \item the entries of $X^{(k)}$ are algebraic integers;
    \item $\sup_{k}\max_{\sigma\in \Gal(\Qbar/\bQ)}\|\sigma(X^{(k)})\|<+\infty$; and
    \item $\#(\Gal(\Qbar/\bQ)\cdot X^{(k)})<+\infty$.
\end{itemize}
If $X^{(k)}\in M_{n(k)}(\bC)^{r}$ we say that it is \textbf{Galois bounded} if $(X^{(k)}_{j})_{k=1}^{\infty}$ is Galois bounded for all $j=1,\cdots,r$.
 If $(M,\tau)$ is a tracial von Neumann algebra, and if $x\in M^{r}$ has $\ell_{X^{(k)}}\to \ell_{x}$, then we say that $X^{(k)}$ are a \textbf{Galois bounded sequence of microstates} for $x$.
\end{defn}

Recalling the correspondence between finite Galois extensions and finite index normal subgroups of the absolute Galois group discussed in \S \ref{subsec: Galois theory}, one can rephrase being Galois bounded in the following way. A sequence $(X^{(k)})_{k=1}^{\infty}\in \prod_{k}M_{n(k)}(\Qbar)$ is Galois bounded if and only if there is a sequence $F_{k}$ of subfields of $\bC$ which are finite Galois extensions of $\bQ$ such that:
\begin{itemize}
    \item we have $X^{(k)}\in M_{n(k)}(F_{k}\cap \mathcal{O})$,
    \item $\sup_{k}[F_{k}:\bQ]<+\infty,$
    \item $\sup_{k}\max_{\sigma\in \Gal(F_{k}/\bQ)}\|\sigma(X^{(k)})\|<+\infty.$
\end{itemize}
In fact, it is possible to rephrase all of our proofs in this framework without any reference to the absolute Galois group. However, phrasing everything in terms of the absolute Galois group makes the setup cleaner and simplifies the proofs of closure of Galois bounded elements under various operations such as multiplication, adjoints, and conjugation by permutations.

\begin{prop}\label{prop:omnibus GB}
Let $S$ be the set of Galois bounded sequences in $\prod_{k}M_{n(k)}(\bC)$.
\begin{enumerate}[(1)]
    \item $S$ is a subring of $\prod_{k}M_{n(k)}(\mathcal{O})$ \label{item:Galois bounded is a ring} which is closed under adjoints, and contains all sequences of the form $(\alpha 1_{n(k)})_{k=1}^{\infty}$ for $\alpha\in \mathcal{O}$. \label{item: Galois bounded ring}

    \item $S$ is invariant under the conjugation action of $\prod_{k}S_{n(k)}$ on $\prod_{k}M_{n(k)}(\Qbar)$. \label{item:Galois bounded perm invariance}
    \item Suppose $X=(X_{k})_{k}\in S$ and $m(k)$ any sequence of integers. If $(Y_{k})_{k}\in \prod_{k}M_{m(k)}(\Qbar)$ is Galois bounded, we have that $(X_{k}\otimes Y_{k})_{k}$ is Galois bounded.
    \label{item:tensors of GB}
    \item If $(X^{(k)})_{k}\in S^{r}$, then for all $P\in \Qbar\ang{T_{1},T_{1}^{*},\cdots,T_{r},T_{r}^{*}}$ we have
    \[\liminf_{k\to\infty}\Det^{+}(P(X^{(k)}))^{1/n(k)}>0.\]
    \label{item: FKD from algebraic integers}
\end{enumerate}
\end{prop}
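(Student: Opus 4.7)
My plan is to treat (1)--(3) as bookkeeping for how the Galois action interacts with matrix operations, and then to combine these with Lemma \ref{lem:Bring in the Galois bring in the funk} to obtain (4), which carries the substantive content. For (1), I would use that $\mathcal{O}$ is a ring (see \S\ref{subsec: Galois theory}) to conclude that sums, products, adjoints, and scalar multiples of matrices with entries in $\mathcal{O}$ remain over $\mathcal{O}$. For the numerical bounds I would invoke the identities $\sigma(A+B) = \sigma(A) + \sigma(B)$, $\sigma(AB) = \sigma(A)\sigma(B)$, and $\sigma(A^*) = \widetilde{\sigma}(A)^*$ from (\ref{eqn: adjoint Galois eqn}). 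These give uniform operator-norm bounds on the Galois orbits via the triangle inequality and submultiplicativity (using that $\sigma \mapsto \widetilde{\sigma}$ is a bijection of $\Gal(\Qbar/\bQ)$), and they give orbit-size bounds because $\sigma(A \star B)$ is determined by the pair $(\sigma(A), \sigma(B))$, so the orbit of $A \star B$ injects into a product of the individual finite orbits. The scalar claim is immediate from $\sigma(\alpha 1_{n(k)}) = \sigma(\alpha) 1_{n(k)}$ together with the finiteness of the Galois orbit of any $\alpha \in \Qbar$.

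Part (2) follows because every permutation matrix $P$ has entries in $\{0,1\} \subset \bZ$ and is therefore fixed by every $\sigma \in \Gal(\Qbar/\bQ)$; hence $\sigma(P^* A P) = P^* \sigma(A) P$, and unitary conjugation preserves both operator norm and orbit size. For (3), I would use $\sigma(X \otimes Y) = \sigma(X) \otimes \sigma(Y)$, the multiplicativity of operator norm under tensor product, and the same orbit-product argument as in (1).

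For (4), given $P \in \Qbar\langle T_1, T_1^*, \ldots, T_r, T_r^*\rangle$, I would first choose $N \in \bZ_{>0}$ such that $Q := NP$ has all coefficients in $\mathcal{O}$; this is possible since $P$ has only finitely many nonzero coefficients and $\mathcal{O}$ is the integral closure of $\bZ$ in $\Qbar$. By (1), the sequence $(Q(X^{(k)}))_k$ is then Galois bounded, being a finite sum of products of Galois bounded matrices and $\mathcal{O}$-scalar matrices. Lemma \ref{lem:Bring in the Galois bring in the funk} now yields
\[\Det^+(Q(X^{(k)}))^{1/n(k)} \geq C^{-d^{2}+1} > 0,\]
with $C$ and $d$ finite and independent of $k$. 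Since $P(X^{(k)}) = N^{-1} Q(X^{(k)})$, the scaling relation $\Det^+(N^{-1} Q(X^{(k)})) = N^{-\rank(Q(X^{(k)}))} \Det^+(Q(X^{(k)}))$ combined with $0 \leq \rank(Q(X^{(k)}))/n(k) \leq 1$ introduces at worst a factor of $N^{-1}$ on the $n(k)$-th root, which preserves the uniform positive lower bound and gives the required liminf estimate.

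The main subtlety I expect is in (4): one must track that denominator-clearing does not destroy the bound, which is exactly what the rank-to-dimension estimate above accomplishes. Beyond this, the proof reduces to invoking the quantitative pseudo-determinant bound of Lemma \ref{lem:Bring in the Galois bring in the funk}, with parts (1)--(3) providing precisely the closure properties needed to guarantee that $Q(X^{(k)})$ falls within its hypotheses.
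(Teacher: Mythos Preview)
Your proposal is correct and follows essentially the same approach as the paper's proof: parts (1)--(3) are handled by the same entrywise Galois identities and orbit-product bounds, and part (4) is reduced to Lemma~\ref{lem:Bring in the Galois bring in the funk} after clearing denominators to land in $\mathcal{O}$-coefficients. Your treatment of the scaling step in (4) is in fact more explicit than the paper's (which simply says ``by scaling''), and your direct argument for (3) via $\sigma(X\otimes Y)=\sigma(X)\otimes\sigma(Y)$ is a minor variant of the paper's reduction to $Y_k=1_{m(k)}$; both are equally valid.
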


\begin{proof}
(\ref{item:Galois bounded is a ring}): That the norm boundedness condition is closed under sums and products follows from the facts $\cO$ is a ring, that each $\sigma$ induces an automorphism of $M_{n(k)}(\mathcal{O})$, and that the operator norm is submultiplicative. For the last condition, note that for $A,B\in S$, we have that
\[\Gal(\Qbar/\bQ)\cdot(A^{(k)}B^{(k)})\subseteq \{\sigma(A^{(k)})\phi(B^{(k)}):\sigma,\phi\in \Gal(\Qbar/\bQ)\},\]
with a similar result for the sum. That $S$ contains all constant algebraic integer multiples of the identity is an exercise. Finally, to see that $S$ is closed under adjoints, let $A=(A^{(k)})_{k=1}^{\infty}\in S$.
Equation (\ref{eqn: adjoint Galois eqn})
implies that $((A^{(k)})^{*})_{k=1}^{\infty}\in S$.
The desired result follows.

(\ref{item:Galois bounded perm invariance}): This follows from the fact the action of $\Gal(\Qbar/\bQ)$ on $M_{n(k)}(\mathcal{O})$ commutes with the conjugation action of $S_{n(k)}$.

(\ref{item: FKD from algebraic integers}):
By scaling, we may assume that $P\in \mathcal{O}\ang{T_{1},T_{1}^{*}\cdots,T_{r},T_{r}^{*}}$. By (\ref{item:Galois bounded is a ring}), we know that $(P(X^{(k)}))_{k=1}^{\infty}\in S$.
Set
\[C=\sup_{k}\max_{\sigma\in \Gal(\Qbar/\bQ)}\|\sigma( P(X^{(k)}))\|<+\infty,\]
\[d=\sup_{k}\#\Gal(\Qbar/\bQ)\cdot P(X^{(k)})<+\infty.\]
For each $k$, we then have by Lemma \ref{lem:Bring in the Galois bring in the funk},
\[\Det^{+}(P(X^{(k)}))^{1/n(k)}\geq C^{-d^{2}+1}.\]
Taking limit infimums of both sides completes the proof.

(\ref{item:tensors of GB}): Using (\ref{item:Galois bounded is a ring}) we may reduce to the case that $Y_{k}$ is the $m(k)\times m(k)$ identity matrix. This case is an exercise using, for example,  that
\[\sigma\cdot(A\otimes 1_{m})=(\sigma(A)\otimes 1_{m})\]
for $m,n\in \bN$ and $A\in M_{n}(\Qbar)$.

\end{proof}

We now obtain Theorem \ref{thm:FKD positive intro} from the introduction as a corollary of Proposition \ref{prop:omnibus GB}.

\begin{thm}[{Theorem~\ref{thm:FKD positive intro}}]\label{item:FKD positive repeat}
If $(M,\tau)$ is a tracial von Neumann algebra, and $x$ is a generating tuple for $M$ with a Galois bounded sequence of microstates, then for any matrix polynomial in $x$ with algebraic coefficients, the Fuglede--Kadison pseudo-determinant is positive.
\end{thm}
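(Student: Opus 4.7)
My plan is to promote Proposition \ref{prop:omnibus GB}(\ref{item: FKD from algebraic integers}) from scalar to matrix polynomials, and then to transfer the resulting uniform lower bound on finite-dimensional Fuglede--Kadison pseudo-determinants to $P(x)$ in the limit by combining convergence of laws with upper semicontinuity of the pseudo-determinant under weak-$*$ convergence of spectral measures.

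Let $P \in M_{m,n}(\Qbar\ang{T_1, T_1^*, \ldots, T_r, T_r^*})$. After multiplying by a nonzero rational integer (which only scales $\Det^+$ by a positive factor), I may assume that the coefficients of $P$ lie in $\cO$. Evaluating on the microstates gives a matrix $P(X^{(k)}) \in M_{m,n}(M_{n(k)}(\bC))$ of size $mn(k) \times nn(k)$ with entries in $\cO$. Expanding $P = \sum_{i,j,\alpha} c_{ij\alpha}\, E_{ij} \otimes T^{\alpha}$ (where $E_{ij}$ is a matrix unit) writes $P(X^{(k)})$ as a ring-theoretic combination of the Galois bounded tuples $X^{(k)}$ tensored with the fixed integer matrix units $E_{ij}$. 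Items \ref{item:Galois bounded is a ring} and \ref{item:tensors of GB} of Proposition \ref{prop:omnibus GB} then show that $(P(X^{(k)}))_k$, and hence also the square sequence $(P(X^{(k)})^{*}P(X^{(k)}))_k \in M_{nn(k)}(\cO)$, is Galois bounded. Applying Lemma \ref{lem:Bring in the Galois bring in the funk} to the latter yields constants $C, d$ independent of $k$ such that
\[ \Det^+\bigl(P(X^{(k)})^* P(X^{(k)})\bigr)^{1/(nn(k))} \geq C^{-d^2+1}, \]
which I rephrase as a uniform lower bound on $\int_{(0,\infty)} \log t \, d\mu_{|P(X^{(k)})|}(t)$, where the spectral measure is taken with respect to $\tr_n \otimes \tr_{n(k)}$.

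The final step is to pass to the limit. Convergence of laws $\ell_{X^{(k)}} \to \ell_x$, together with the uniform operator-norm bound coming from Galois boundedness, produces weak-$*$ convergence $\mu_{|P(X^{(k)})|} \to \mu_{|P(x)|}$ on a fixed compact interval $[0,C']$. The main technical subtlety, and the step I anticipate needing the most care, is that the pseudo-determinant is only upper semicontinuous under such weak-$*$ convergence; fortunately, this goes in exactly the direction needed, since it means that any ``mass escaping to $0$'' can only increase, not decrease, the limit determinant. Concretely, $h(t) := \log(t)\,\mathbf{1}_{(0,\infty)}(t)$ is upper semicontinuous on $[0,C']$ (jumping upward to $0$ at $t=0$), so the portmanteau theorem yields
\[ \int_{(0,\infty)} \log t \, d\mu_{|P(x)|}(t) \;\geq\; \limsup_k \int_{(0,\infty)} \log t \, d\mu_{|P(X^{(k)})|}(t) \;>\; -\infty, \]
and exponentiating gives $\Det^+_M(P(x)) > 0$.
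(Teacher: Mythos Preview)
Your proposal is correct and follows essentially the same approach as the paper: use the matrix units together with Proposition~\ref{prop:omnibus GB}(\ref{item: Galois bounded ring}),(\ref{item:tensors of GB}) to show the matrix-valued microstates are Galois bounded, invoke Lemma~\ref{lem:Bring in the Galois bring in the funk} for a uniform lower bound, and pass to the limit via weak-$*$ semicontinuity of the log integral. The only cosmetic difference is that the paper first formally reduces to the scalar case $n=1$ by introducing an auxiliary tuple $\widetilde{x}=((x_l\otimes E_{ij}),(1\otimes E_{ij}))$ and rewriting $P(x)$ as a scalar polynomial in $\widetilde{x}$ so as to quote Proposition~\ref{prop:omnibus GB}(\ref{item: FKD from algebraic integers}) verbatim, whereas you argue directly in $M_{nn(k)}(\cO)$ and apply Lemma~\ref{lem:Bring in the Galois bring in the funk}; these are the same argument unpacked differently. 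One small point of care: the phrase ``$(P(X^{(k)}))_k$ is Galois bounded'' is only literally well-posed once $m=n$ (the definition is for square matrices), so either pass to $P^*P$ at the outset as the paper does, or note that the rectangular case is handled by this reduction.
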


\begin{proof}
Suppose that $x=(x_{1},\cdots,x_{r})$, and let $P\in M_{m,n}(\Qbar\ang{T_{1},\cdots,T_{r}}).$
Since \[P^{*}P\in M_{n}(\Qbar\ang{T_{1},T_{1}^{*},\cdots,T_{r},T_{r}^{*}})\] we may, and will, assume that case $m=n$.
Let $E_{ij}$ be the standard matrix units in $M_{n}(\bC)$.
It follows by Proposition \ref{prop:omnibus GB} (\ref{item:tensors of GB}) that  the new tuple
\[\widetilde{x}=((x_{l}\otimes E_{ij})_{1\leq i,j\leq n,1\leq l\leq r},(1\otimes E_{ij})_{1\leq i,j\leq n})\]
has a Galois bounded sequence of microstates. Let
\[I=[r]\times [n]\times [n]\sqcup [n]\times [n].\]
For $1\leq i,j\leq n$ we use $(\varnothing,i,j)$ for the element of $I$ which correspond to $(i,j)$ in the copy of  $[n]\times [n]$ inside $I$.
Suppose $P=\sum_{i,j}P_{ij}\otimes E_{ij}$, then as
\[P(x)=\sum_{ij}P_{ij}((x_{l}\otimes E_{ii})_{l=1}^{r})(1\otimes E_{ij}),
\]
we have
\[P(x)=\sum_{i,j}Q_{ij}(\widetilde{x}),\]
where $Q_{ij}\in \Qbar\ang{T_{\beta},T_{\beta}^{*}:\beta\in I}$ is given by
\[Q_{ij}=P_{ij}((T_{(l,i,i)})_{l=1}^{r})T_{\varnothing,i,j}.\]
This construction allows us to reduce to $n=1$, by replacing $x$ with $\widetilde{x}$. Hence we may, and will, assume that $n=1$.

Let $(X^{(k)})_{k}$ be a Galois bounded sequence of microstates for $x$.
The fact that $(X^{(k)})_{k}$ are microstates for $x$ implies that $\mu_{|P(X^{(k)})|}\to \mu_{|P(x)|}$ weak$^{*}$.
Thus, by weak$^{*}$-semicontinuity of integrating the logarithm and (\ref{item: FKD from algebraic integers}),
\begin{align*}
\log \Det^{+}_{M}(P(x))=\int_{(0,\infty)} \log(t)\,d\mu_{|P(x)|}(t)&\geq \liminf_{k\to\infty}\int_{(0,\infty)} \log(t)\,d\mu_{|P(X^{(k)})|}(t)\\
&=\liminf_{k\to\infty}\log \Det^{+}(P(X^{(k)}))^{1/n(k)}>-\infty.\qedhere
\end{align*}

\end{proof}

For later use, we record the fact that the existence of Galois boundedness passes to direct sums.

\begin{lem}\label{lem:direct sums of Galois bounded sequences}
Let $(M_{j},\tau_{j}),j=1,2$ be tracial von Neumann algebras.
Suppose that $x_{j}\in M_{j}^{r_{j}}$ for some $r_{1},r_{2}$ and each $j=1,2$.
Suppose that $(n_{j}(k))_{k=1}^{\infty}$ are sequences of natural numbers for $j=1,2$. Assume we are given for $j=1,2$ microstates sequences $X^{(k)}_{j}\in M_{n_{j}(k)}(\bC)^{r_{j}}$ for $x_{j}$.
Finally, assume that $(t_{k,j})_{k=1}^{\infty},j=1,2$ are sequence of integers so that
\[\alpha=\lim_{k\to\infty}\frac{t_{k,1}n_{1}(k)}{t_{k,1}n_{1}(k)+t_{k,2}n_{2}(k)}\]
exists. Then $((X^{(k)}_{1})^{\oplus t_{k,1}}\oplus 0, 0\oplus (X^{(k)}_{2})^{\oplus t_{k,2}})$ converges in law to the law of $(x_{1}\oplus 0,0\oplus x_{2})$ regarded as an element in $\alpha(M_{1},\tau_{1})\oplus (1-\alpha)(M_{2},\tau_{2})$.
In particular, if there are Galois bounded sequences of microstates for $x_{1},x_{2}$ then for every $0\leq \alpha\leq 1$, there are Galois bounded sequence of microstates for $(x_{1}\oplus 0,0\oplus x_{2})$ regarded as an element in $\alpha(M_{1},\tau_{1})\oplus (1-\alpha)(M_{2},\tau_{2})$.
\end{lem}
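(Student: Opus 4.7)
The plan is to reduce everything to a direct block-diagonal computation and then to show the two Galois-boundedness requirements (entry integrality, uniform norm bounds on conjugates, and finite orbits) are preserved under taking direct sums with finitely many repetitions.

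First I would set $N_{j,k} = t_{k,j}n_{j}(k)$ for $j=1,2$, $N(k) = N_{1,k} + N_{2,k}$, and
\[ Y_1^{(k)} = (X_1^{(k)})^{\oplus t_{k,1}} \oplus 0_{N_{2,k}}, \qquad Y_2^{(k)} = 0_{N_{1,k}} \oplus (X_2^{(k)})^{\oplus t_{k,2}}, \]
both viewed as tuples in $M_{N(k)}(\bC)^{r_j}$. Given any noncommutative $*$-polynomial $P$ in $r_1+r_2$ variables, I would decompose it as $P = c \cdot 1 + P_1 + P_2 + P_{12}$, where $c$ is the constant, $P_j$ is the sum of monomials of positive degree involving only variables from the $j$-th tuple, and $P_{12}$ is the sum of the remaining mixed monomials. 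Since $(a\oplus 0)(0\oplus b) = 0$ and vice versa, every monomial in $P_{12}$ vanishes when evaluated on $(Y_1^{(k)}, Y_2^{(k)})$, and one gets the block decomposition
\[ P(Y_1^{(k)},Y_2^{(k)}) = c \cdot 1_{N(k)} + \bigl(P_1(X_1^{(k)})^{\oplus t_{k,1}} \oplus 0\bigr) + \bigl(0 \oplus P_2(X_2^{(k)})^{\oplus t_{k,2}}\bigr). \]

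Applying $\tr_{N(k)}$, using the fact that the normalized trace is unchanged by replacing a matrix with a direct sum of copies of itself, yields
\[ \tr_{N(k)}\bigl(P(Y_1^{(k)},Y_2^{(k)})\bigr) = c + \tfrac{N_{1,k}}{N(k)}\tr_{n_1(k)}(P_1(X_1^{(k)})) + \tfrac{N_{2,k}}{N(k)}\tr_{n_2(k)}(P_2(X_2^{(k)})). \]
The hypothesis that $X_j^{(k)}$ are microstates for $x_j$ gives $\tr_{n_j(k)}(P_j(X_j^{(k)})) \to \tau_j(P_j(x_j))$, and by definition $N_{1,k}/N(k) \to \alpha$. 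The same decomposition applied to $(x_1\oplus 0, 0\oplus x_2)$ inside $\alpha(M_1,\tau_1)\oplus(1-\alpha)(M_2,\tau_2)$ shows that the limit is exactly $c + \alpha \tau_1(P_1(x_1)) + (1-\alpha)\tau_2(P_2(x_2))$, proving convergence in law.

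For the Galois-boundedness statement, I would simply observe that if $A \in M_n(\cO)$ then $A^{\oplus t} \oplus 0_m \in M_{tn+m}(\cO)$; that for any $\sigma \in \Gal(\Qbar/\bQ)$, $\sigma(A^{\oplus t}\oplus 0_m) = \sigma(A)^{\oplus t} \oplus 0_m$ and hence its operator norm equals $\|\sigma(A)\|$; and that the Galois orbit of $A^{\oplus t}\oplus 0_m$ is in bijection with the orbit of $A$. Consequently the pair constructed above inherits all three Galois-bounded properties coordinatewise from $X_1^{(k)}$ and $X_2^{(k)}$, finishing the proof. There is no real obstacle here: the only subtle point is remembering to treat the constant term of $P$ separately so that it is not absorbed into a block, and to note that trace normalization is what makes the weighted convex combination $\alpha\tau_1 + (1-\alpha)\tau_2$ appear.
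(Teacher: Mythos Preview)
Your proof is correct. The paper states this lemma without proof, evidently regarding it as a routine verification; your argument supplies exactly the expected direct computation, handling the constant term separately and using the block-diagonal structure to reduce to the given microstate convergence for each summand, then checking Galois boundedness coordinatewise.
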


\subsection{Algebraic soficity}

\begin{defn}
A sequence of matrices $X^{(k)} \in M_{n(k)}(\bC)$ is called \textbf{asymptotically constant on the diagonal} if
\[
\lim_{k \to \infty} \norm{\Delta_{n(k)}[X^{(k)}] - \tr_{n(k)}[X^{(k)}] 1}_2 = 0,
\]
where $\Delta_{n(k)}$ is the conditional expectation onto the diagonal subalgebra of $M_{n(k)}(\bC)$.
\end{defn}

\begin{defn}
\label{def:alg sofic}
Given a tracial von Neumann algebra $(M,\tau)$ we say that a tuple $x = (x_i)_{i \in I}$ in $M^{I}$ is \textbf{algebraically sofic} if for any finite $F\subseteq I$, there is a sequence of microstates $(X_i^{(k)})_{i \in F}$ for $x\big|_{F}$ that is Galois bounded (Definition \ref{def: Galois bounded}), such that $P(X^{(k)})$ is asymptotically constant on the diagonal for every $*$-polynomial $P$. 
We say that $M$ is \textbf{algebraically sofic} if it has an algebraically sofic generating tuple.

\end{defn}
If $(M,\tau)$ is a tracial von Neumann algebra, and $x\in M^{I}$ is algebraically sofic, we remark that for any set $J$ and any $P\in \mathcal{O}\ang{T_{i},T_{i}^{*}:i\in I}^{J}$ we have that $P(x)$ is algebraically sofic.
The name derives from the case of soficity of groups, as defined by Gromov \cite{GromovSurjun} and named by Weiss \cite{WeissSofic}.
Soficity can be phrased in terms of microstates: a group $\Gamma$ is sofic if and only for every finite $F\subseteq \Gamma$ there  is a sequence $\sigma_{k}\in S_{n(k)}^{F}$ which, when viewed as matrices, form microstates for $F$.
If we equip $S_{n(k)}$ with the metric
\[d(\sigma,\pi)=\frac{1}{n(k)}|\{j:\sigma(j)\ne \pi(j)\}|,\]
then if $\Sigma,\Pi$ are the matrices corresponding to $\sigma,\pi$ we have
\[d(\sigma,\pi)=\frac{1}{2}\|\Sigma-\Pi\|_{2}^{2}.\]
If $F\subseteq \Gamma$ is finite, with $e\in F$ and if $\sigma_{k}\in S_{n(k)}^{F}$ is a microstates sequence for $F$, then for every $g\in F$ we have
\[\tr(\sigma_{k,g})\to_{k\to\infty}\delta_{g=e}\]
Since
\[
\|\Delta_{n(k)}(\sigma_{k,g})-\tr(\sigma_{k,g})\|_{2}^{2}=\|\Delta_{n(k)}(\sigma_{k,g})\|_{2}^{2}-\tr(\sigma_{k,g})^{2}
=\tr(\sigma_{k,g})(1-\tr(\sigma_{k,g}))\] 
being a sequence of microstates forces $\sigma_{k}$ to be asymptotically constant on the diagonal. Thus soficity of $\Gamma$ implies that every tuple in $\Gamma$ is algebraically sofic, when we view $\Gamma\leq \mathcal{U}(L(\Gamma))$.
We record this observation in the following proposition.
\begin{prop}
If $\Gamma$ is a sofic group, then $\set{u_g \colon g \in \Gamma} \subseteq L(\Gamma)$ is algebraically sofic.
In particular, $L(\Gamma)$ is algebraically sofic.
\end{prop}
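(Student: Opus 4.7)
The plan is to use the observation already recorded in the discussion immediately preceding the proposition: the permutation matrices coming from a sofic approximation already satisfy all three requirements in Definition~\ref{def:alg sofic}. Fix a finite subset $F \subseteq \Gamma$ containing $e$, and (for each $k$) let $\sigma_k\colon F_k \to S_{n(k)}$ be a sofic approximation with $F_k \supseteq F$, where $F_k$ is chosen large enough to contain all words of bounded length in $F \cup F^{-1}$ that we will need. Let $\Sigma_{k,g}$ denote the permutation matrix of $\sigma_{k,g}$. Then $\set{\Sigma_{k,g}}_{g \in F}$ is a microstates sequence for $\set{u_g}_{g \in F}$ by the definition of soficity.

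The Galois-bounded condition is immediate: the entries of each $\Sigma_{k,g}$ lie in $\set{0,1} \subseteq \bZ \subseteq \cO$, so $\Gal(\Qbar/\bQ) \cdot \Sigma_{k,g} = \set{\Sigma_{k,g}}$ has a single element, while $\norm{\sigma(\Sigma_{k,g})} = \norm{\Sigma_{k,g}} = 1$ for every Galois automorphism $\sigma$. Thus all three bullets of Definition~\ref{def: Galois bounded} are satisfied trivially.

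For the diagonal-constancy condition, first note that $\Delta_{n(k)}(\Sigma_{k,g})$ is the $\{0,1\}$-valued diagonal matrix indicating $\Fix(\sigma_{k,g})$, so $\Delta_{n(k)}(\Sigma_{k,g})^{2}=\Delta_{n(k)}(\Sigma_{k,g})$ and
\[
\norm{\Delta_{n(k)}(\Sigma_{k,g}) - \tr_{n(k)}(\Sigma_{k,g})\, I}_2^2 = \tr_{n(k)}(\Sigma_{k,g}) - \tr_{n(k)}(\Sigma_{k,g})^2.
\]
Since $\tr_{n(k)}(\Sigma_{k,g}) = n(k)^{-1}\# \Fix(\sigma_{k,g}) \to \delta_{g=e} \in \{0,1\}$ by the sofic condition, the right side tends to $0$. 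To handle a general $*$-polynomial $P$ in the variables indexed by $F$, expand $P(\Sigma_{k,g})$ as a linear combination of words in the $\Sigma_{k,g}^{\pm 1}$; each such word corresponds to an element $w \in \Gamma$, and, for $k$ large enough (using that $F_k$ can be taken to include all these words), the sofic approximation property gives $\norm{\Sigma_{k,g_1}^{\pm 1}\cdots \Sigma_{k,g_m}^{\pm 1} - \Sigma_{k,w}}_2 \to 0$. Because $\Delta_{n(k)}$ and $\tr_{n(k)}$ are $\norm{\cdot}_2$-contractions, it suffices to verify asymptotic diagonal-constancy for each $\Sigma_{k,w}$ individually, which is the calculation already performed. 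This gives $\|\Delta_{n(k)}(P(\Sigma_{k,g})) - \tr_{n(k)}(P(\Sigma_{k,g}))\, I\|_2 \to 0$, completing the proof. The only subtle point is the bookkeeping to ensure the sofic approximation is simultaneously good on all words appearing in $P$; this is standard and requires no new idea.
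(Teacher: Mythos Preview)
Your argument is correct and is essentially the one the paper sketches in the discussion immediately preceding the proposition: permutation matrices have entries in $\{0,1\}\subseteq\cO$, so Galois-boundedness is trivial, and the computation $\|\Delta(\Sigma)-\tr(\Sigma)I\|_2^2=\tr(\Sigma)(1-\tr(\Sigma))\to 0$ handles asymptotic diagonal-constancy. One small simplification: your detour through $\Sigma_{k,w}$ and the enlarged sets $F_k$ is unnecessary, since any $*$-monomial in the $\Sigma_{k,g}$ is itself a permutation matrix whose normalized trace tends (by the microstate property) to $\tau(u_w)=\delta_{w=e}\in\{0,1\}$, so the same variance computation applies directly to each monomial.
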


In the definition of algebraic soficity, we retain having asymptotically constant diagonals, but we relax the requirement of being a permutation (ill-adapted to a nongroup setting).
We instead require entries which are algebraic integers and whose entries have a ``size of integrality" (both in absolute value and in terms of how large of a field extension they live) that is controlled.
The intuition behind this relaxation is that the fact that permutation matrices have integer entries, and the integrality of permutations is used in the proofs of many applications of soficity.

We want to show that finite-dimensional tracial $*$-algebras are algebraically sofic, and to this end, we first show that $M_n(\bC)$ is algebraically sofic using the following group-measure-space construction.
\begin{prop}\label{prop:construction of GBM for matrices}
Let $\Gamma$ be a finite abelian group. Let $(u_{\chi})_{\chi\in\widehat{\Gamma}}$ be the canonical unitaries in $L(\widehat{\Gamma})$.  Consider the action $\alpha$ of $\Gamma$ on $L(\widehat{\Gamma})$ by $\alpha_{g}(u_{\chi})=\chi(g)^{-1}u_{\chi}$ for all $g\in \Gamma,\chi\in \widehat{\Gamma}$.
\begin{enumerate}[(1)]
\item We have $L(\widehat{\Gamma})\rtimes \Gamma\cong M_{|\Gamma|}(\bC)$. \label{item:Takdual}
\item Endow $L(\widehat{\Gamma})\rtimes \Gamma$ with its unique tracial state $\tau$. For $g\in \Gamma$, let $v_{g}$ be the canonical unitaries in $L(\widehat{\Gamma})\rtimes\Gamma$ implementing the action of $\Gamma$. Let
\[\pi\colon L(\widehat{\Gamma})\rtimes \Gamma\to B(L^{2}(L(\widehat{\Gamma})\rtimes \Gamma))\]
be the GNS representation coming from $\tau$. \label{item:GNS matrix entries}
Then:
\begin{enumerate}[(a)]
    \item $\{u_{\chi}v_{g}\colon \chi\in \widehat{\Gamma},\ g\in G\}$ is an orthonormal basis of $L^{2}(L(\widehat{\Gamma})\rtimes \Gamma)$;
    \item if $D$ is the MASA in $B(L^{2}(L(\widehat{\Gamma})\rtimes \Gamma))$ generated by the rank one projections onto $\bC u_\chi v_g$, for $\chi\in \widehat{\Gamma}$ and $g\in \Gamma$, then
    \[\bE_{D}\circ \pi=\tau;\]
    \item the matrix entries of $\pi(u_{\chi}v_{g})$ with respect to $(u_{\theta}v_{h})_{\theta\in \Gamma,h\in G}$ are elements of $\{0\}\cup \{\phi(k):\phi\in \widehat{\Gamma},k\in \Gamma\}$. \label{item:Matri entries algebraic integers 1}
    \item For $1\leq i,j\leq |\Gamma|$, let $E_{ij}$ be the standard matrix units of $M_{|\Gamma|}(\bC)$.  Let $K=\{\phi(k):\phi\in \widehat{\Gamma},k\in \Gamma\}$. Then the isomorphism given in $(\ref{item:Takdual})$ can be chosen so that the matrix entires of  $\pi(E_{ij})$ with respect to $(u_{\theta}v_{h})_{\theta\in \Gamma,h\in \Gamma}$ lie in $\frac{1}{|\Gamma|}\bZ[K]$, for $1\leq i,j\leq \Gamma$.
    \label{item:matrix entries algebraic integers 2}
\end{enumerate}
\end{enumerate}
\end{prop}

\begin{proof}
(\ref{item:Takdual}): The Fourier transform induces an isomorphism $L(\widehat{\Gamma})\cong \ell^{\infty}(\Gamma)$ which conjugates the action of $\Gamma$ on $L(\widehat{\Gamma})$ to the shift action of $\Gamma$ on $\ell^{\infty}(\Gamma)$. This induces an isomorphism $\ell^{\infty}(\Gamma)\rtimes \Gamma\cong L(\widehat{\Gamma})\rtimes \Gamma$, where the action of $\Gamma$ on $\ell^{\infty}(\Gamma)$ is the shift action. The algebra $\ell^{\infty}(\Gamma)\rtimes \Gamma$ is generated by the family of matrix units $\{\delta_g u_{gh^{-1}} \delta_h\colon g,h\in \Gamma\}$ and is therefore isomorphic to $M_{|\Gamma|}(\bC)$.

(\ref{item:GNS matrix entries}): The fact that $\{u_{\chi}v_{g}\colon \chi\in \hat{\Gamma},\ g\in \Gamma\}$ are pairwise orthogonal is a direct computation. We leave it as an exercise to verify that
    \[
        \ang{\pi(u_{\chi}v_{g})u_{\theta}v_{h},u_{\phi}v_{k}} = \theta(g) \delta_{\chi \theta=\phi} \delta_{gh=k},
    \]
for all $\chi\in\widehat{\Gamma},g\in \Gamma$. This implies that
\[\bE_{D}(\pi(u_{\chi}v_{g}))=\tau(u_{\chi}v_{g})1\]
for all $\chi\in\widehat{\Gamma},g\in\Gamma$. Since such elements span $L(\widehat{\Gamma})\rtimes \Gamma$, it follows that $\bE_{D}\circ \pi=\tau$. Part (\ref{item:matri entries algebraic integers 1}) follows from the above computation For part (\ref{item:matrix entries algebraic integers 2}), note that the fact that $(u_{\theta}v_{h})_{\theta,h}$ is an orthonormal basis implies that
\[E_{ij}=\sum_{\chi,g}\tr(E_{ij}(u_{\chi}v_{g})^{*})u_{\chi}v_{g}.\]
As shown above, the matrix entries of $\pi(u_{\chi}v_{g})$ with respect to the basis $(u_{\theta}v_{h})_{\theta,h}$ are in $\bZ[K]$, so the above expansion completes the proof.
\end{proof}

We start by recording how algebraic soficity behaves under tensor products.

\begin{prop}\label{prop: alg sofic tensor}
For $j=1,2$ let $(M_{j},\tau_{j})$ be tracial von Neumann algebras and $x_{j}\in M_{j}^{r_{j}}$ algebraically sofic tuples. Then $(x_{1}\otimes 1,1\otimes x_{2})$ is algebraically sofic.
\end{prop}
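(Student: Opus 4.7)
The plan is to construct the required microstates by tensoring together the microstates given by algebraic soficity of $x_1$ and $x_2$ individually (implicit in the hypothesis). Specifically, let $(X_1^{(k)})_k \in \prod_k M_{n_1(k)}(\bC)^{r_1}$ and $(X_2^{(k)})_k \in \prod_k M_{n_2(k)}(\bC)^{r_2}$ be Galois bounded microstates for $x_1$ and $x_2$ witnessing their algebraic soficity. I will take as candidate microstates for $(x_1 \otimes 1, 1 \otimes x_2)$ the tuple
\[
\uu{X}^{(k)} := \paren{X_1^{(k)} \otimes I_{n_2(k)},\ I_{n_1(k)} \otimes X_2^{(k)}} \in M_{n_1(k) n_2(k)}(\bC)^{r_1 + r_2},
\]
and verify the three defining conditions of algebraic soficity in turn.

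First, Galois boundedness of $\uu{X}^{(k)}$ follows directly from Proposition~\ref{prop:omnibus GB}(\ref{item:tensors of GB}): each coordinate of $\uu{X}^{(k)}$ is a tensor product of a Galois bounded sequence with an identity matrix (which is itself trivially Galois bounded via part (\ref{item: Galois bounded ring})). Second, for convergence in law, I will use that the two halves $X_1^{(k)} \otimes I$ and $I \otimes X_2^{(k)}$ commute. Hence any $*$-polynomial in $\uu{X}^{(k)}$ can be expanded into a finite sum $\sum_i Q_i(X_1^{(k)}) \otimes R_i(X_2^{(k)})$ for $*$-polynomials $Q_i, R_i$. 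The normalized trace on $M_{n_1(k)}(\bC) \otimes M_{n_2(k)}(\bC)$ is the product $\tr_{n_1(k)} \otimes \tr_{n_2(k)}$, so individual convergence $\ell_{X_j^{(k)}} \to \ell_{x_j}$ yields $\ell_{\uu{X}^{(k)}} \to \ell_{(x_1 \otimes 1,\, 1 \otimes x_2)}$.

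Third, for asymptotic constancy on the diagonal, I will use that the diagonal subalgebra of $M_{n_1(k) n_2(k)}(\bC)$ decomposes as $D_{n_1(k)} \otimes D_{n_2(k)}$, and the associated conditional expectation is $\Delta = \Delta_{n_1(k)} \otimes \Delta_{n_2(k)}$. Using the decomposition $P(\uu{X}^{(k)}) = \sum_i Q_i(X_1^{(k)}) \otimes R_i(X_2^{(k)})$ from above, it suffices to control each term. Writing $A_k = Q_i(X_1^{(k)})$, $B_k = R_i(X_2^{(k)})$, $\alpha_k = \tr(A_k)$, and $\beta_k = \tr(B_k)$, the triangle inequality gives
\[
\norm{\Delta(A_k \otimes B_k) - \alpha_k\beta_k \cdot 1 \otimes 1}_2 \le \norm{\Delta_{n_1(k)}(A_k) - \alpha_k 1}_2 \norm{B_k}_2 + |\alpha_k|\, \norm{\Delta_{n_2(k)}(B_k) - \beta_k 1}_2,
\]
where the norms on the right use the normalized traces and factor through the tensor product. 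The first factor on each summand tends to $0$ by the asymptotic-constancy-on-diagonal hypothesis for $x_1, x_2$; the other factors are uniformly bounded since microstates have uniformly bounded operator norm. Combining with $\alpha_k \beta_k \to \tau_1(Q_i(x_1)) \tau_2(R_i(x_2)) = \tau(P(x_1 \otimes 1,\, 1 \otimes x_2))$ (applied to each summand) concludes the verification.

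I do not anticipate a serious obstacle: all the pieces are essentially bookkeeping built on Proposition~\ref{prop:omnibus GB} and the tensorial structure of the trace and the diagonal MASA. The only mildly delicate point is keeping track of the decomposition into the sum $\sum_i Q_i \otimes R_i$, which is available precisely because $\uu{X}^{(k)}_1$ commutes with $\uu{X}^{(k)}_2$ for every $k$ (and, in the limit, $x_1 \otimes 1$ commutes with $1 \otimes x_2$).
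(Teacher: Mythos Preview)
Your proof is correct and follows the same approach as the paper: tensor the given microstates, invoke Proposition~\ref{prop:omnibus GB}(\ref{item:tensors of GB}) for Galois boundedness, and use the tensor structure of the trace and diagonal MASA for the remaining conditions. The paper actually leaves the latter two verifications as an exercise, so your write-up simply fills in those details.
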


\begin{proof}
Let $X_{j}^{(k)}\in M_{n_{j}(k)}(\cO)$ be Galois bounded microstates for $x_{j}$ so that polynomials in $X_{j}^{(k)}$ are asymptotically constant on the diagonal.  By Proposition \ref{prop:omnibus GB} (\ref{item:tensors of GB}) we know that $(X_{1}^{(k)}\otimes 1_{n_{2}(k)},1_{n_{1}(k)}\otimes X_{2}^{(k)})$ is Galois bounded. Monomials in $(X_{1}^{(k)}\otimes 1_{n_{2}(k)},1_{n_{1}(k)}\otimes X_{2}^{(k)})$ are asymptotically constant on the diagonal, and thus polynomials in $(X_{1}^{(k)}\otimes 1_{n_{2}(k)},1_{n_{1}(k)}\otimes X_{2}^{(k)})$ are asymptotically constant on the diagonal.
\end{proof}

This result on tensor products can also be used to show that algebraic soficity is preserved under finite direct sums with rational weights. We show in the next section that the direct sum of two algebraically sofic algebras without rational weights can fail to be algebraically sofic (see Corollary~\ref{cor:non_alg_sof_direct_sum}). 

\begin{thm} \label{thm: finite dim algebras are alg sofic}
Suppose that $(M_{j},\tau_{j})$, $j=1,2$ are algebraically sofic, and let $q\in (0,1)\cap \bQ$. Let $M=M_{1}\oplus M_{2}$ equipped with the trace 
\[\tau(a_{1},a_{2})=q\tau_{1}(a_{1})+(1-q)\tau_{2}(a_{2}).\]
Then $(M,\tau)$ is algebraically sofic.
In particular, finite dimensional tracial von Neumann algebras where every central projection has trace in $\bQ$ are algebraically sofic. 
\end{thm}

\begin{proof}
The ``in particular" part follows from the fact that every finite-dimensional von Neumann algebra is a direct sum of matrix algebras.

Note that $\{\phi(k):k\in \bZ/n\bZ,\phi\in (\bZ/n\bZ)^{\widehat{}}\}=\bZ[e^{2\pi i/n}]$, and $e^{2\pi i/n}$ is algebraic integer.
So the fact that $M_{n}(\bC)$ is algebraically sofic follows from Proposition \ref{prop:construction of GBM for matrices} applied to $\Gamma=\bZ/n\bZ$.  For later use, we note the following specific consequence. For $1\leq i,j\leq n$, let $E_{ij}$ be the standard matrix units of $M_{n}(\bC)$. Then Proposition \ref{prop:construction of GBM for matrices} shows that $(nE_{i,j})_{i,j}$ is an algebraically sofic tuple.

Now let $(A_{j},\tau_{j})_{j=1,2}$ be tracial $*$-algebras. Let $s_{j}\in A_{j}^{r_{j}}$ be a generating tuple for $A_{j},j=1,2$ such that there exists Galois bounded microstates $(X^{(k)}_{j})_{k=1}^{\infty}\in \prod_{k}M_{n_{j}(k)}(\bC)$ as in the definition of algebraic soficity.  

Let $A=A_{1}\oplus A_{2}$ be endowed with the trace
\[\tau(a_{1},a_{2})=t\tau_{1}(a_{1})+(1-t)\tau_{2}(a_{2})\]
for some $t\in \bQ\cap (0,1)$. Write $t=\frac{k}{n}$ with $n\in \bN$ and $0<k<n$. We use the embedding 
\[\pi\colon A_{1}\oplus A_{2}\to M_{n}(\bC)\otimes A_{1}\otimes A_{2}\]
given by $\pi(a_{1},a_{2})=\left(\sum_{i=1}^{k}E_{ii}\right)\otimes a_{1}\otimes 1+\left(\sum_{i=k+1}^{n}E_{ii}\right)\otimes 1\otimes a_{2}.$
It thus suffices to note that Propositions \ref{prop: alg sofic tensor} and \ref{prop:omnibus GB} (\ref{item:Galois bounded is a ring})  implies that
\[n\left(\left(\sum_{i=1}^{k}E_{ii}\right)\otimes s_{1}\otimes 1,n\left(\sum_{i=k+1}^{n}E_{ii}\right)\otimes 1\otimes s_{2}\right)\]
is an algebraically sofic tuple.
\end{proof}

\subsection{Tracial $*$-algebras which are not algebraically sofic}

In this section, we show that certain $*$-algebras can fail to be algebraically sofic. In fact, we show that any self-adjoint element which is algebraically sofic (regarded as a $1$-tuple) must have transcendental trace. Using, we can show that if we equip $A=M_{k_{1}}(\bC)\oplus M_{k_{2}}(\bC)$ with a trace which has a central projection with transcendental trace, then every algebraic sofic element of $A$ must be a scalar multiple of the identity. 

Our starting point is the following result of Thom.

\begin{lem}[Lemma 3.1 of \cite{ThomDiophantine}] \label{lem:the real Thom}
Fix $k\in \bN$ and $C\in [0,+\infty)$. Let $T_{k,c}$ be the set of polynomials in $\bZ[t]$ of degree at most $k$ and whose roots in $\bC$ all have modulus at most $C$. Then $T_{k,C}$ is finite.

\end{lem}

For our purposes, it will be best to rephrase this as follows.
\begin{lem}\label{lem:repharsing THom}
Fix $k\in \bN$ and $C\in [0,+\infty)$. Let $S_{k,C}$ be the set of algebraic integers in $\bC$ which have at most $k$ Galois conjugates, all of which have modulus at most $C$. Then $S_{k,c}$ is finite.
\end{lem}

\begin{proof}
Let $T_{k,c}$ be as in Lemma \ref{lem:the real Thom}. Then
\[S_{k,c}=\bigcup_{p\in T_{k,c}}p^{-1}(\{0\}),\]
so $S_{k,C}$ is a finite union of finite sets.

\end{proof}

\begin{cor}
Let $(M,\tau)$ be a tracial von Neumann algebra and suppose that $x\in M_{s.a.}$ is algebraically sofic. Then $\tau(x)$ is an algebraic integer.
\end{cor}

\begin{proof}
Let $S_{k,c}$ be as in Lemma \ref{lem:repharsing THom}.
Let $X^{(N)}\in M_{k(N)}(\cO)$ be a Galois bounded sequence of microstates which witness that $x$ is algebraically sofic. Since $X^{(N)}$ has asymptotically constant diagonal entries and the average of these entries converges to $\tau(x)$ we may choose $j(N)\in \{1,\cdots,K(N)\}$ with
\[X^{(N)}_{j(N),j(N)}\to_{N\to\infty}\tau(x).\]
By definition of Galois boundedness, there is a $C\in [0,+\infty)$ and a $k\in \bN$ with $X^{(N)}_{j(N),j(N)}\in S_{k,C}$ for all $N$. By Lemma \ref{lem:repharsing THom}, we have that $\tau(x)\in S_{k,C}$ and so $\tau(x)$ is an algebraic integer.

\end{proof}


\begin{thm}\label{thm: store brand Andreas Thom}
 Let $(M,\tau)$ be a tracial von Neumann algebra and $x\in M$ algebraically sofic and self-adjoint. Then all the eigenvalues of $x$ are algebraic integers.   
\end{thm}

\begin{proof}
Let $X^{(N)}\in M_{k(N}(\cO)$ be a sequence of Galois bounded microstates which witness algebraic soficity. By passing to a subsequence, we may assume that there is an $r\in \bN$ with
\[r=|\{\sigma(X^{(N)}):\sigma\in \Gal(\Qbar/\bQ)\}.\]
For each $N\in \bN$, choose $\sigma_{0,N},\cdots,\sigma_{r-1,N}\in \Gal(\Qbar/\bQ)$ such that $\sigma_{0,N}=\id$ and 
\[\{\sigma(X^{(N)}):\sigma\in \Gal(\Qbar/\bQ)\}=\{\sigma_{j,N}(X^{(N)}):j\in \{0,\cdots,r-1\}\}.\]
Set 
\[Y^{(N)}=\bigoplus_{j=0}^{r-1}\sigma(Y^{(j)}).\]
Passing to a further subsequence we may assume that $\mu_{Y^{(N)}}$ weak$^{*}$-converges to a probability measure $\mu$. Let $\mu_{X^{(N)}},\mu_{Y^{(N)}}$ be the spectral measures of $X^{(N)},Y^{(N)}$.  Since the characteristic polynomial of $Y^{(N)}$ is invariant under the absolute Galois group, we know it is an integer and thus $\mu_{Y^{(N)}}$ is an atomic measure supported on algebraic integers, and by Galois boundedness it is supported in a uniformly bounded set. Thus $\mu$ is an integer measure in the sense of \cite{ThomInteger}.
Let $\mu_{x}$ be the spectral measure of $x$. Since $\mu_{X^{(N)}}\leq r\mu_{Y^{(N)}}$ for every $N$ we have that $\mu_{x}\leq r\mu$. If $\lambda\in \bC$ is not an algebraic integer, then since $\mu$ is an integer measure it follows from \cite[Theorem 2.8]{ThomInteger} that 
\[\mu_{x}(\{\lambda\})\leq r\mu(\{\lambda\})=0.\]

\end{proof}


\begin{cor}\label{cor:algebriac weights at algebraics}
Let $(M,\tau)$ be a tracial von Neumann algebra and $x\in M$ algebraically sofic and self-adjoint. If the spectral measure $\mu_{x}$ of $x$ is atomic, then $\mu_{x}(\{\lambda\})$ is algebraic for every $\lambda\in \bC$.
\end{cor}

\begin{proof}
The case where $\lambda$ is transcendental follows from the above Theorem. So suppose that $\lambda$ is algebraic.
Define a polynomial
\[F_{\lambda}(t)=\prod_{\beta\in \spec(x),\beta\ne \lambda}(t-\beta).\]
Note that $F_{\lambda}$ has algebraic coefficients.
Then
\[1_{\lambda}(x)=\prod_{\beta\in \spec(x),\beta\ne \lambda}(\lambda-\beta)^{-1}F_{\lambda}(x).\]
So
\[\mu_{x}(\{\lambda\})=\prod_{\beta\in \spec(x),\beta\ne \lambda}(\lambda-\beta)^{-1}\tau(F_{\lambda}(x)).\]
We have that $\tau(F_{\lambda}(x))\in \Qbar$ by the preceding theorem, since $x^{k}$ is algebraically sofic for all $k\in \bN\cup\{0\}$.

\end{proof}

\begin{cor}\label{cor:non_alg_sof_direct_sum}
Suppose that $k_{1},k_{2}\in \bN$ and that $\gcd(k_{1},k_{2})=1$. Let $\alpha\in \bC$ be transcendental. Let $A=M_{k_{1}}(\bC)\oplus M_{k_{2}}(\bC)$ equipped with a trace 
\[\tau(x_{1},x_{2})=\alpha \tr(x_{1})+(1-\alpha)\tr(x_{2}).\]
If $x\in A$ is algebraically sofic with respect to $\tau,$ then $x\in \bC 1$. In particular, $A$ is not algebraically sofic. 
\end{cor}

\begin{proof}
Since $x+x^{*}$ and $i(x-x^{*})$ are algebraically sofic if $x$ is, we may assume that $x$ is self-adjoint.
Consider the spectral measure $\mu_{x}$ of $x$. Write $x=(x_{1},x_{2})$. By Theorem \ref{thm: store brand Andreas Thom} and Corollary \ref{cor:algebriac weights at algebraics}, $\mu_{x}$ is an atomic measure concentrated on algebraic integers and $\mu_{x}(\{\lambda\})$ is algebraic for every $\lambda\in \bC$. Let $\lambda\in \bC$, and let 
\[t_{i}=\frac{\dim(\ker(x_{i}-\lambda))}{k_{i}}\in \bQ.\]
Then
\[\mu_{x}(\{\lambda\})=\alpha(t_{1}-t_{2})+t_{2}.\]
Note that $\alpha$ is transcendental, whereas $t_{1},t_{2},\mu_{x}(\{\lambda\})$ are algebraic. Since algebraic numbers form a field, this forces $t_{1}=t_{2}$. Our assumptions on $k_{1},k_{2}$ thus forces that either 
\[\frac{\dim(\ker(x_{1}-\lambda))}{k_{1}}=\frac{\dim(\ker(x_{2}-\lambda))}{k_{2}}=0\]
or
\[\frac{\dim(\ker(x_{1}-\lambda))}{k_{1}}=\frac{\dim(\ker(x_{2}-\lambda)}{k_{2}}=1.\]
Since this holds for all $\lambda$ and $\mu_{x}$ is a probability measure, this forces $\mu_{x}$ to be a Dirac mass. Thus $x\in \bC 1$.    
\end{proof}

\section{Algebraic soficity preserved by graph products}
\label{sec: gp of as}

In this section, we show that the graph product of algebraically sofic tracial $*$-algebras is algebraically sofic.  In order to obtain the Galois bounded microstates for the graph product from Galois bounded microstates for the individual algebras, we use a construction based on conjugation by random permutation matrices from \cite{AIMSuperTeamI} (stated as Theorem \ref{thm: permutation model} below); this is the analog of Charlesworth and Collins' construction in the unitary case \cite{CC2021}, and the proof uses a similar technique as in the free case studied by \cite{ACDGM2021}.

To model graph products, we will need to force certain matrices to commute with each other, and certain matrices to be asymptotically free.  As in \cite{CC2021}, we will accomplish this by taking the models in a tensor product of several copies of $M_N(\bC)$, with matrices having only scalar components in certain tensor factors; in this way we can ensure that matrices which are meant to commute do so.  Heuristically, the index set of this tensor product will be a finite set of \emph{strings}.  Given a subalgebra of this larger product formed by replacing some of the tensor factors with copies of $\bC I_N$, we will think of its elements as corresponding to collections of beads on the strings where the algebra has a non-trivial factor.  Two algebras commute, then, if the beads representing their elements can slide past each other on this collection of strings.
For more detail on this picture, refer to \cite[\S3.2]{CC2021} or more generally \cite{MR2651902}.

The information of which tensor factors of a matrix are allowed to be non-scalar is determined by the vertex it corresponds to.
We will choose our set of strings and the assignments of vertices to sets of strings in such a way that matrices will share a string in common precisely when the graph product structure insists that the algebras they are modelling should be freely independent.
Given a prescribed finite graph $\cG$ it is always possible to choose a set $\cS$ and a relation $\sstrc$ with this with this property; one approach was given in \cite[Section 3.1]{CC2021}.

The matrices produced by our construction will all live in $M_N(\bC)^{\otimes \cS}$.  The inputs to the construction are deterministic matrices $X_j^{(N)}$ which are each assigned a certain vertex $\chi(j)$, such that $X_j^{(N)}$ is $M_N(\bC)^{\otimes \cS_v}$, viewed as a subalgebra of $M_N(\bC)^{\otimes \cS}$ in the standard way.  Each matrix $X_j^{(N)}$ with $\chi(j) = v$ will be conjugated by a random permutation matrix $\Sigma_v^{(N)}$ in $\bigotimes_{\cS_v} M_N(\bC)$ to produce a new random matrix $\uu{X}_j^{(N)}$ in $M_N(\bC)^{\otimes \cS}$.  When we apply this construction in the proof of Theorem \ref{thm: algebraic soficity intro}, $X_j^{(N)}$ will be a matrix approximation for some element of $A_{\chi(j)}$, more specifically some polynomial evaluated on microstates for our chosen generators of $A_{\chi(j)}$.

Theorem \ref{thm: permutation model} is a statement about certain polynomials in $\uu{X}_j^{(N)}$ given by $\cG$-reduced words with respect to the graph $\cG$ (see Definition \ref{def: G reduced word}).  The following theorem is a special case and slight reformulation of the main theorem of \cite{AIMSuperTeamI}.

\begin{thm} \label{thm: permutation model}
Let $\cG = (\cV, \cE)$ be a simple graph with vertex set $\cV$, $\cS$ be a finite set, and $\sstrc$ be as above so that $\cS_v \cap \cS_{v'} = \varnothing$ if and only if
$(v, v') \in \cE$.

For $N \in \bN$, let $\Delta_{N^{\#\cS}}$ be the conditional expectation onto the diagonal $*$-subalgebra $D_N$ of $\bigotimes_{\cS}M_N(\bC)$.

Let $N_k$ be a sequence of natural numbers with $N_k \to \infty$.  Let $\chi : [m] \to \cV$ be such that $\chi(1)\cdots\chi(m)$ is a $\cG$-reduced word, and for $i = 1, \ldots, m$ and $k \in \bN$, let $X_i^{(k)} \in \bigotimes_{S_{\chi(i)}} M_{N_k}(\bC)$ be a deterministic matrix, with $\sup_{k,i,j} \norm{X_i^{(k)}} < \infty$.

Further, let $\set{\Sigma_{v}^{(N)} : v \in \cV}$ be a family of independent uniformly random permutation matrices, with $\Sigma_v \in \bigotimes_{\cS_v} M_N(\bC)$, and write
\[
\uu{X}_i^{(k)} = \paren{\Sigma_{\chi(i)}^{(k)}}^* X_i^{(k)} \Sigma_{\chi(i)}^{(k)} \otimes I_{N_k}^{\otimes\cS\setminus\cS_{\chi(i)}} \in \bigotimes_{\cS} M_N(\bC).
\]
Then
\begin{equation}\label{eqn: graph ind ovre diagonal intro}
  \lim_{k \to \infty} \norm{\Delta_{N_k^{\#\cS}}[(\uu{X}_1^{(k)} - \Delta_{N^{\#\cS}}[\uu{X}_1^{(k)}]) \dots (\uu{X}_k^{(k)} - \Delta_{N^{\#\cS}}[\uu{X}_m^{(k)}])]}_2 = 0 \text{ almost surely.}
  \end{equation}
\end{thm}

Note in \cite{AIMSuperTeamI}, $\mathcal{C}$ rather than $\mathcal{V}$ is used for the set of vertices of $\mathcal{G}$.  In the notation of \cite{AIMSuperTeamI}, we have taken the diagonal matrices $\Lambda_{i,j}^{(n)}$ to be identity.  Moreover, rather than having matrices $X_{i,j}^{(N)}$ with $j = 1$, \dots, $\ell(i)$, we have a single matrix $X_i^{(N)}$ (we take $\ell(i) = 1$).  We used $m$ here rather than $k$ to denote the length of the word.  Finally, we rather than having a sequence $\uu{X}^{(N)}$ of matrices of size $N^{\# \cS}$, we consider a sequence $\uu{X}^{(N_k)}$ of matrices of size $N_k^{\# \cS}$; the theorem clearly still holds in this setting, since the proof is based on computing expectations and analyzing their dependence on $N$, for which can simply substitute $N_k$.

We are now ready to prove Theorem~\ref{thm: algebraic soficity intro}, which we restate here.

\begin{thm}[{Theorem ~\ref{thm: algebraic soficity intro}}]
\label{thm: gp of as is as}
Let $\cG = (\cV, \cE)$ be a finite simple graph and let $(A_v,\tau_v)$ for a $v \in \cV$ be a family of tracial $*$-algebras.  If each $(A_v,\tau_v)$ is algebraically sofic, then so is $\gp_{v \in \cG} (A_v,\tau_v)$.
\end{thm}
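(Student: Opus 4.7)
The plan is to construct explicit algebraically sofic microstates for the graph product by conjugating inflated vertex-algebra microstates by independent random permutation matrices, and then invoking Theorem~\ref{thm: permutation model} together with its almost-sure upgrade (Corollary~\ref{cor: almost sure}) to extract a deterministic realization whose families are asymptotically graph independent over the diagonal.

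First I would pick a finite string set $\cS$ and relation $\sstrc$ between $\cS$ and $\cC$ so that $\cS_c\cap\cS_{c'}=\varnothing$ iff $(c,c')\in\cE$, which exists by \cite{CC2021}. For each $c\in\cC$, choose an algebraically sofic generating tuple $x_c$ for $A_c$ witnessed by Galois bounded microstates $X_c^{(k)}\in M_{n_c(k)}(\bC)$ whose $*$-polynomials are asymptotically constant on the diagonal. Setting $N(k)=\prod_{c\in\cC}n_c(k)$ ensures $n_c(k)\mid N(k)^{\#\cS_c}$, so I may inflate to $\tilde{X}_c^{(k)} = X_c^{(k)}\otimes I_{N(k)^{\#\cS_c}/n_c(k)}$, viewed inside $\bigotimes_{\cS_c}M_{N(k)}(\bC)$; this inflation preserves Galois boundedness (Proposition~\ref{prop:omnibus GB}\,(3)), asymptotic diagonal constancy of polynomials, and the law of $x_c$. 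Taking independent uniformly random permutations $\Sigma_c^{(k)}\in \bigotimes_{\cS_c}M_{N(k)}(\bC)$ and forming $\uu{\tilde{X}}_c^{(k)}$ as in Theorem~\ref{thm: permutation model}, the almost-sure convergence applied to countably many polynomial tests allows me to fix a deterministic realization of permutations whose resulting families enjoy asymptotic graph independence over the diagonal. Because permutation matrices have entries in $\{0,1\}\subseteq\cO$ and are fixed entrywise by $\Gal(\Qbar/\bQ)$, Proposition~\ref{prop:omnibus GB} guarantees that $(\uu{\tilde{X}}_c^{(k)})_c$ is Galois bounded.

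To check convergence in law to the generating tuple $(x_c)_c$ of $\gp_{c\in\cG}(A_c,\tau_c)$, I would fix a $\cG$-reduced word $c_1\cdots c_k$ and polynomials $p_i$ with $\tau_{c_i}(p_i(x_{c_i}))=0$. Setting $Y_i = p_i(\uu{\tilde{X}}_{c_i}^{(k)})$, the single-color diagonal constancy---preserved through inflation and through permutation conjugation, which merely permutes diagonal entries---forces $\|\Delta[Y_i]\|_2\to 0$, while Theorem~\ref{thm: permutation model} gives $\|\Delta[\prod_i(Y_i-\Delta[Y_i])]\|_2\to 0$. An elementary hybrid expansion then yields $|\tau(\prod_i Y_i)|\leq\|\Delta[\prod_i Y_i]\|_2\to 0$, and multilinearity upgrades this to convergence of all moments to the graph product moments.

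The hard part will be upgrading to asymptotic diagonal constancy of \emph{every} $*$-polynomial. Because $\uu{\tilde{X}}_c^{(k)}$ and $\uu{\tilde{X}}_{c'}^{(k)}$ commute whenever $(c,c')\in\cE$ (their non-identity tensor factors being disjoint), every monomial admits a $\cG$-reduced normal form $q_1(\uu{\tilde{X}}_{d_1}^{(k)})\cdots q_p(\uu{\tilde{X}}_{d_p}^{(k)})$ that matches, factor by factor, its image in the graph product. I would induct on $p$; the cases $p\in\{0,1\}$ are immediate from single-color constancy. For the inductive step, set $\alpha_i = \tau_{d_i}(q_i(x_{d_i}))$ and expand
\[ \prod_{i=1}^{p}q_i(\uu{\tilde{X}}_{d_i}^{(k)}) = \sum_{I\subseteq[p]}\Bigl(\prod_{i\notin I}\alpha_i\Bigr)\prod_{i\in I}\bigl(q_i(\uu{\tilde{X}}_{d_i}^{(k)})-\alpha_i I\bigr), \]
with the parallel identity in $\gp_{c\in\cG}(A_c,\tau_c)$. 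The $I=[p]$ summand is controlled as in the previous paragraph, matching the graph-product value $0$ by $\cG$-independence. For each proper non-empty $I$, commuting and merging same-color factors in $\prod_{i\in I}(q_i(\uu{\tilde{X}}_{d_i}^{(k)})-\alpha_i I)$ produces a $\cG$-reduced normal form of length strictly less than $p$, with the identical reduction occurring in the graph product, so the inductive hypothesis delivers the matching limit. The main subtlety is that subwords of a $\cG$-reduced word are not themselves $\cG$-reduced in general, but they always collapse via the available commutations to a strictly shorter $\cG$-reduced form; the essential point is that this collapse happens in parallel on the matrix side and the graph-product side, so the inductive hypothesis applies to both simultaneously.
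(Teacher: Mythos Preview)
Your proposal is correct and takes essentially the same approach as the paper: inflate the vertex microstates into the tensor-product string structure, conjugate by independent random permutations, invoke Theorem~\ref{thm: permutation model} together with its almost-sure upgrade, and then combine asymptotic graph independence over the diagonal with the single-color diagonal constancy to verify both law convergence and asymptotic diagonal constancy of all $*$-polynomials. The only cosmetic difference is in the last step: where you run an induction on the length of the $\cG$-reduced normal form via the inclusion--exclusion expansion, the paper short-circuits this by directly invoking the spanning set of centered reduced-word products from \cite[Remark~2.7]{CaFi17}, which packages the same computation.
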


\begin{proof}
Suppose that $(A_v,\tau_v)$ for $v \in \cV$ are algebraically sofic, and let us prove that the graph product $\gp_{v \in \cG} (A_v,\tau_v)$ is algebraically sofic.

For each $v$, fix a generating tuple $y_v$ for $A_v$.  Fix Galois bounded sequences of microstates $\tilde{Y}_v^{(k)}$ in $M_{N_{v,k}}(\bC)$ for $y_v$.  Let $N_k = \prod_{v \in \cV} N_{v,k}$, and let $Y_v^{(k)} = \tilde{Y}_v^{(k)} \otimes I_{N_k / N_{k,v}}$.  Note that $Y_v^{(k)}$ is a Galois bounded sequence of microstates for $y_v$, but these microstates now come from the same matrix algebra $M_{N_k}(\bC)$ for all vertices $v$.

Let $\cS$ be a finite set and $\sstrc$ a relation between $\cS$ and $\cV$ so that for $v_1, v_2 \in \cV$, $(v_1, v_2) \in \cE$ if and only if $\cS_{v_1} \cap \cS_{v_2} = \emptyset$.
For each $v \in \cV$, fix some $s_v \in \cS_v$.
Let $\set{\Sigma_v^{(k)} \colon v \in \cV}$ be a family of independent uniformly random permutation matrices, with $\Sigma_v \in \bigotimes_{\cS_v}M_{N_k}(\bC)$.
Let
\[
  Z_v^{(k)} = \left[ (\Sigma_v^{(k)})^t (X_v^{(k)} \otimes I_N^{\otimes \cS_v \setminus \{s_v\}} ) \Sigma_v^{(k)} \right] \otimes I_N^{\otimes \cS \setminus \cS_v}.
\]

Let $y$ and $Z^{(k)}$ be the tuples obtained by concatenating the tuples $y_v$ and $Z_v^{(k)}$  respectively, over all $v \in \cV$.
It is immediate that each random outcome of $(Z^{(k)})_{k \in \bN}$ is Galois bounded.

It remains to show that almost surely $Z^{(k)}$ is a microstate sequence for $y$ and has asymptotically constant diagonal.  Being a microstate sequence means that for every non-commutative polynomial $p$, we have
\[
\lim_{k \to \infty} |\tr_{N_k^{\# \cS}} (p(Z^{(k)})) - \tau(p(y))| = 0,
\]
while being asymptotically constant on the diagonal means that
\[
\lim_{k \to \infty} \norm{\Delta_{N_k^{\#\cS}}[p(Z^{(k)}] - \tr_{N_k^{\# \cS}} (p(Z^{(k)})) I_{N_k^{\#\cS}}}_2 = 0.
\]
In fact, the combination of these two conditions is equivalent to
\begin{equation} \label{eq: combo independence and diagonal}
\lim_{k \to \infty} \norm{\Delta_{N_k^{\#\cS}}[p(Z^{(k)}] - \tau(p(y)) I_{N_k^{\# \cS}} }_2 = 0;
\end{equation}
this follows from the triangle inequality and the fact that $\tr_{N_k^{\# \cS}} \circ \Delta_{N_k^{\#\cS}} = \tr_{N_k^{\# \cS}}$.  By linearity, it suffices to check \eqref{eq: combo independence and diagonal} for a spanning set of polynomials.  Recall \cite[Remark 2.7]{CaFi17} that polynomials in $y$ are spanned by $1$ and polynomials of the form
\begin{equation} \label{eq: centered polynomial}
p(z) = (p_1(z_{\chi(1)}) - \tau(p_1(y_{\chi(1)}))) \dots (p_\ell(z_{\chi(\ell)}) - \tau(p_\ell(y_{\chi(\ell)})))
\end{equation}
for $\cG$-reduced words $\chi(1) \dots \chi(\ell)$, with $\ell \geq 1$.  The claim \eqref{eq: combo independence and diagonal} is immediate when $p = 1$.  Thus, it remains to show \eqref{eq: combo independence and diagonal} in the case when $p$ has the form \eqref{eq: centered polynomial}, and note that in this case the term $\tau(p(y))$ in \eqref{eq: combo independence and diagonal} vanishes by graph independence of $(y_v)_{v \in \cV}$.  Hence, our goal \eqref{eq: combo independence and diagonal} reduces to showing that almost surely
\begin{equation} \label{eq: limit thing 1}
\lim_{k \to \infty} \norm{\Delta_{N_k^{\#\cS}}[(p_1(Z_{\chi(1)}^{(k)}) - \tau(p_1(y_{\chi(1)}))) \dots (p_\ell(Z_{\chi(\ell)}^{(k)}) - \tau(p_\ell(y_{\chi(\ell)})))]}_2 = 0.
\end{equation}
Now we assumed that $Y_v^{(k)}$ is a microstate sequence for $y$ that is asymptotically constant on the diagonal, and $Z_v^{(k)}$ is obtained from $Y_v^{(k)}$ by tensoring with the identity and conjugating by a permutation matrix, and so
\[
\lim_{k \to \infty} \norm{\Delta_{N_k^{\# \cS}}[p_j(Z_{\chi(j)}^{(k)})] - \tau(p_j(y)) I_{N_k^{\# \cS}}}_2 = \lim_{k \to \infty} \norm{\Delta_{N_k}[p_j(Y_{\chi(j)}^{(k)})] - \tau(p_j(y)) I_{N_k}}_2 = 0.
\]
Thus, by swapping out each $\tau(p_j(y))$ term \eqref{eq: limit thing 1} for $\Delta_{N_k^{\# \cS}}[p_j(Z_{\chi(j)}^{(k)})]$, using the fact that $p_j(Z_{\chi(j)}^{(k)})$ is uniformly bounded in operator norm as $k \to \infty$, we obtain
\begin{multline*}
\lim_{k \to \infty} \Bigl \lVert \Delta_{N_k^{\#\cS}}[(p_1(Z_{\chi(1)}^{(k)}) - \tau(p_1(y_{\chi(1)}))) \dots (p_\ell(Z_{\chi(\ell)}^{(k)}) - \tau(p_\ell(y_{\chi(\ell)})))] \\
- \Delta_{N_k^{\#\cS}}[(p_1(Z_{\chi(1)}^{(k)}) - \Delta_{N_k^{\# \cS}}[p_1(Z_{\chi(1)}^{(k)})]) \dots (p_\ell(Z_{\chi(\ell)}^{(k)}) - \Delta_{N_k^{\#\cS}}(p_\ell(Z_{\chi(\ell)}^{(k)})))] \Bigr \rVert = 0,
\end{multline*}
so now the claim \eqref{eq: limit thing 1} to be proved reduces to
\begin{equation} \label{eq: limit thing 2}
\lim_{k \to \infty} \norm{\Delta_{N_k^{\#\cS}}[(p_1(Z_{\chi(1)}^{(k)}) - \Delta_{N_k^{\# \cS}}[p_1(y_{\chi(1)})]) \dots (p_\ell(Z_{\chi(\ell)}^{(k)}) - \Delta_{N_k^{\#\cS}}(p_\ell(Z_{\chi(\ell)}^{(k)})))] }_2 = 0.
\end{equation}
Now we can apply Theorem~\ref{thm: permutation model}, taking
\[
X_j^{(k)} = p_j(Y_{\chi(j)}^{(k)}) \otimes I_N^{\otimes \cS_v \setminus \{s_v\}},
\]
so that
\[
\uu{X}_j^{(k)} = (\Sigma_{\chi(j)}^{(k)})^* (p_j(Y_{\chi(j)}) \otimes I_N^{\otimes \cS_v \setminus \{s_v\}}) \Sigma_{\chi(j)}^{(k)} =  p_j((\Sigma_{\chi(j)}^{(k)})^*(Y_{\chi(j)} \otimes I_N^{\otimes \cS_v \setminus \{s_v\}})\Sigma_{\chi(j)}^{(k)}) = p_j(Z_{\chi(j)}^{(k)}).
\]
Thus, Theorem \ref{thm: permutation model} implies that \eqref{eq: limit thing 2} holds, which completes the proof.
\end{proof}

\section{Strong $1$-boundedness for graph products} \label{sec: proof of main theorem}

Strong 1-boundedness is a von Neumann algebraic property introduced by Jung in \cite{Jung2007}.
It implies the lack of a robust space of microstates up to conjugacy for any generating set of a von Neumann algebra.
This typically is achieved when the von Neumann algebra is hyperfinite (see in connection, \cite{JungRegularity, JungHyperFiniteIneq}) or admits algebraic rigidity in the form of abundant commutation (see \cite{GePrime, geshen, VoiculescuPropT}) or existence of diffuse regular hyperfinite subalgebras (see \cite{Voiculescu1995, Hayes2018}), or even in the analytic setting of Property (T) which allows for discretizing the microstate space (see \cite{JungS, propts1b}).
On the other hand, strong 1-boundedness implies that every generating set has microstates free entropy dimension $\delta_0(x)=1$, hence the free group factors are not strongly 1-bounded. 
Hayes refined this notion by extracting a numerical invariant, implicit in \cite{Jung2007}, for von Neumann algebras called the 1-bounded entropy $h$ (see \cite{Hayes2018}).
This is the main framework in which the modern theory of strong 1-boundedness is carried out.
Non-strongly 1-bounded algebras often exhibit indecomposability relative to strongly 1-bounded subalgebras, which can be used to prove non-isomorphism results or rule out possible structural properties.
As a precise example, non-strongly 1-bounded algebras cannot be generated by two strongly 1-bounded subalgebras with diffuse intersection.
Another application is a free absorption theorem for strongly 1-bounded subalgebras in free products (\cite{FreePinsker}).

Such indecomposability phenomena in the setting of groups in many instances can be encapsulated in $L^2$-invariants, such as the first $L^2$-Betti number (see \cite{LuckBook}).
This cohomological invariant has been of extreme use in the analytic study of groups, and has been increasingly incorporated as far as possible into the study of von Neumann algebras due to its rich applications (see \cite{ConnesShl, PetersonDeriva}).
Having positive first $L^2$-Betti number automatically implies the lack of the sort of algebraic rigidity described above in the group level.
See \cite{PetersonThom} for such results.
The relationship between the first $\ell^2$-Betti number and free entropy theory is a difficult subject that has been heavily investigated (\cite{ConnesShl, JungL2B, Shl2015, vanishingl2bettis1b}).
Strong 1-boundedness for Connes-embeddable group von Neumann algebras is believed to coincide with vanishing first $\ell^2$-Betti number for the group.
However, this has been checked only in certain cases, particularly in one direction as outlined in \cite{DimaLowerEstimates, Shl2015}, and remains a challenging open problem.

Given a tracial von Neumann algebra $(M,\tau)$ and $N$ a von Neumann subalgebra of $M$, the \textbf{$1$-bounded entropy of $N$ in the presence of $M$ is denoted $h(N:M)$.} We set $h(M)=h(M:M)$ and call this the \textbf{$1$-bounded entropy of $M$}. Roughly speaking, the quantity $h(N:M)$ is a measurement of ``how many" finite-dimensional approximations of $N$ there are which extend to $M$,
We will not need the technical definition of $1$-bounded entropy, and refer the reader to \cite[Definition 2.2 and Definition A.2]{Hayes2018} for the precise definition.
We enumerate below the most essential properties of this quantity for our purposes:

\begin{enumerate}
    \item  (see  \cite[\S 2.3.3]{propts1b})\label{fact 1}
 $h(N_{1}:M_{1})\leq h(N_{2}:M_{2})$ if $N_{1}\subset N_{2}\subset M_{2}\subset M_{1}$.
 \item  \label{joins}(see \cite[Lemma A.12]{Hayes2018})
  $h(N_1\vee N_2:M)\leq h(N_1:M)+h(N_2:M)$ if $N_1,N_2\subset M$ and $N_1\cap N_2$ is diffuse.
  In particular, $h(N_1\vee N_2)\leq h(N_1)+h(N_2)$.
 \item \label{normalize 1bEnt} (see \cite{Hayes2018}) $h(N_1:N_2)\leq h(W^*(\mathcal{N}_{N_2}(N_1)):N_2)$ if $N_1\subset N_2$ is diffuse.
 \item If $N\subseteq M$ and $N$ is hyperfinite, then $h(N:M)\leq 0$. \label{hyperfinite}
\end{enumerate}
We will also need Voiculescu's microstates free entropy dimension =$\delta_{0}(x)$ of a self-adjoint tuple $x$ in a tracial von Neumann algebra, define by Voiculescu \cite{Voiculescu1996}. We will need to allow $x$ to be an infinite tuple, as opposed to a finite tuple in Voiculescu's original definition. It is well known to experts how to extend the definition to this setting, for a precise discussion see e.g. the discussion in Section 4 of \cite{propts1b}. We use $\underline{\delta}_{0}(x)$ for the version of microstates free entropy dimension where we replace a limit supremum in the definition with a limit infimum.

In contrast to the rest of the paper, we will need to restrict ourselves to self-adjoint generating tuples. For an integer $r\in \bN$, we let $\bC\ang{S_{1},\cdots,S_{r}}$ be the algebra of noncommutative polynomials in abstract variables $S_{1},\cdots,S_{r}$. We give $\bC\ang{S_{1},\cdots,S_{r}}$ the unique $*$-structure which makes each $X_{j}$ self-adjoint. Given a von Neumann algebra $M$ and a  $x\in M_{s.a.}^{r}$ there is a unique $*$-homomorphism
\[\ev_{x}\colon \bC\ang{S_{1},\cdots,S_{r}}\to M\]
satisfying $\ev_{x}(S_{j})=x_{j}$. We set $P(x)=\ev_{x}(P)$ for $P\in \bC\ang{S_{1},\cdots,S_{r}}$. We will use $\cO\ang{S_{1},\cdots,S_{r}}$, $\Qbar\ang{S_{1},\cdots,S_{r}}$ etc. for the noncommutative polynomials in $r$-variables whose coefficients are in $\cO,\Qbar$ etc.

\subsection{Proof of Theorem \ref{thm: main theorem intro0} } \label{subsec: proof of main theorem}

We are now ready to prove Theorem~\ref{thm: main theorem intro0}.
For simplicity, we treat its parts \ref{item: connected graphs intro} and \ref{item: disconnected graphs intro} separately from part \ref{item: finite-dimensional case}.

\begin{thm}
\label{thm: gp of diffuse}
Let $\cG=(\cV,\cE)$ be a graph with $\#\cV > 1$, and for each $v\in \cV$, let $(M_{v},\tau_{v})$ be a tracial $*$-algebra. Let $(M,\tau)=\gp_{v\in \cG}(M_{v},\tau_{v})$.
\begin{enumerate}[(1)]
    \item If each $M_{v}$ is diffuse and $\cG$ is connected, then $M$ is strongly $1$-bounded (in fact has $1$-bounded entropy at most zero). \label{item: connected and diffuse destroys entropy}
    \item If each $M_{v}$ is diffuse and Connes embeddable, and $\cG$ is disconnected, then there is an index set $I$ and a generating tuple $x\in M_{s.a.}^{I}$ so that $\delta_{0}(x)>1$. In particular, $M$ is not strongly $1$-bounded. \label{item:disconnected and diffuse case}
\end{enumerate}
\end{thm}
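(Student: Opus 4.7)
The plan is to prove the two parts using largely disjoint toolkits: part \ref{item: connected and diffuse destroys entropy} is a consequence of the robustness properties of $1$-bounded entropy listed above, while part \ref{item:disconnected and diffuse case} is a consequence of Voiculescu's free additivity for microstates free entropy dimension applied to the free product decomposition of $M$.

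For part \ref{item: connected and diffuse destroys entropy}, I will build up $M$ along a spanning tree of $\cG$ while maintaining the bound $h(\,\cdot\,:M)\leq 0$ at every stage. First, fix an enumeration $c_0,c_1,\ldots$ of $\cC$ such that each $c_k$ ($k\geq 1$) is adjacent in $\cG$ to some earlier $c_i$; this exists by connectedness. Start with a diffuse abelian subalgebra $A_{c_0}\subseteq M_{c_0}$, which satisfies $h(A_{c_0}:M)\leq 0$ by property (\ref{hyperfinite}). Inductively, assume I have $N_k\subseteq M$ containing $A_{c_0}$ together with $M_{c_1},\ldots,M_{c_k}$ and $h(N_k:M)\leq 0$. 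The new vertex $c_{k+1}$ is adjacent to some $c_i$ with $i\leq k$, so $M_{c_{k+1}}$ commutes with---and hence normalizes---the diffuse subalgebra $D_i:=M_{c_i}$ (if $i\geq 1$) or $D_i:=A_{c_0}$ (if $i=0$), both of which lie in $N_k$. The nontrivial direction of property (\ref{normalize 1bEnt}) from \cite{Hayes2018} combined with monotonicity gives $h(M_{c_{k+1}}\vee D_i:M)\leq h(D_i:M)\leq 0$, and then property (\ref{joins}) applied to $N_k$ and $M_{c_{k+1}}\vee D_i$---whose intersection contains the diffuse $D_i$---yields $h(N_{k+1}:M)\leq 0$ for $N_{k+1}:=N_k\vee M_{c_{k+1}}$. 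A final symmetric step extends $A_{c_0}$ to all of $M_{c_0}$ using any neighbor $c_j$ of $c_0$, so after finitely many iterations the resulting subalgebra equals $M$ and hence $h(M)\leq 0$.

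For part \ref{item:disconnected and diffuse case}, write $\cG=\cG_1\sqcup\cdots\sqcup\cG_k$ ($k\geq 2$) as its decomposition into connected components, and set $M^{(i)}:=\gp_{c\in\cG_i}(M_c,\tau_c)$. Since $\cE$ contains no edges between distinct components, the graph-product relations between elements of distinct $M^{(i)}$ reduce to the free-product ones, so $M=M^{(1)}*\cdots*M^{(k)}$ is a free product. Each $M^{(i)}$ is diffuse (as it contains each diffuse $M_c$) and Connes embeddable by \cite{casperscep}. Pick a self-adjoint generating tuple $X^{(i)}$ of $M^{(i)}$ for each $i$. By Voiculescu's lower bound $\delta_0\geq 1$ for generators of a diffuse Connes-embeddable tracial von Neumann algebra \cite{Voiculescu1996}, $\delta_0(X^{(i)})\geq 1$ for every $i$. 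Since the $X^{(i)}$ are freely independent in $M$, Voiculescu's additivity formula for $\delta_0$ over free products then yields $\delta_0(x)=\sum_i\delta_0(X^{(i)})\geq k\geq 2>1$ for the concatenated tuple $x:=(X^{(1)},\ldots,X^{(k)})$. Since $x$ generates $M$, this implies that $M$ is not strongly $1$-bounded.

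The principal technical subtleties lie in citing the right external results. For part (1), the essential input is Hayes's normalizer theorem---the nontrivial direction of property (\ref{normalize 1bEnt})---together with the final symmetric step needed to absorb the remainder of $M_{c_0}$ beyond $A_{c_0}$. For part (2), the delicate points are verifying the free-product decomposition $M=M^{(1)}*\cdots*M^{(k)}$ directly from the graph product relations, and applying Voiculescu's additivity and lower-bound theorems for $\delta_0$ to self-adjoint tuples over an arbitrary (possibly infinite) index set, which is the reason the theorem statement permits the flexibility of a general indexing set $I$ for the generating tuple.
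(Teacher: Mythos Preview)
Your proposal is correct and follows essentially the same route as the paper: part (1) is an induction along the graph using the normalizer and join properties of $1$-bounded entropy (the paper walks along a path, you walk along a spanning tree and absorb $M_{c_0}$ at the end, but the mechanism is identical), and part (2) is the free-product decomposition over connected components combined with $\underline{\delta}_0\geq 1$ for diffuse Connes-embeddable algebras and superadditivity of free entropy dimension under freeness. One small caution: exact additivity $\delta_0(x)=\sum_i\delta_0(X^{(i)})$ for free tuples is not known in general; the paper more carefully uses $\delta_0(x)\geq\sum_i\underline{\delta}_0(X^{(i)})$ (citing \cite{Voiculescu1998} and \cite{JungRegularity} rather than \cite{Voiculescu1996} for the lower bound $\underline{\delta}_0\geq 1$), which is all that is needed.
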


\begin{proof}

(\ref{item: connected and diffuse destroys entropy}):
Since $\cG$ is connected, we can find a walk $v_{1},v_{2},\cdots,v_{k}$ which visits every vertex of $\cG$ at least once.
Let us denote by $M_{\leq j}$ the algebra generated by $M_{v_1}, \ldots, M_{v_j}$ within $M$.
We claim that $h(M_{\leq j})\leq 0$ for all $j\geq 2$.

Because $M_{v_1}$ and $M_{v_2}$ are diffuse, we may choose diffuse abelian subalgebras $A_1 \leq M_{v_1}$ and $A_2 \leq M_{v_2}$.
Using Properties (\ref{fact 1}), (\ref{normalize 1bEnt}), (\ref{hyperfinite}) of $1$-bounded entropy,
\begin{align*}
h(M_{v_1}\vee A_{2}) & =h(M_{v_1}\vee A_{2}:M_{v_1}\vee A_{2}) \\
&\leq h(W^{*}(\cN_{M_{v_1}\vee A_{2}}(A_{2})):M_{v_1}\vee A_{2})) \\
&\leq h(A_{2}:M_{v_1}\vee A_{2}) \\
&\leq 0.
\end{align*}
Similarly, $h(A_{1}\vee M_{v_2})\leq 0$. As
\[
M_{v_1}\vee A_{2}\cap (A_{1}\vee M_{v_2})\supseteq A_{1}\vee A_{2},
\]
we know that $M_{v_1}\vee A_{2}\cap (A_{1}\vee M_{v_2})$ is diffuse. Thus, by Property (\ref{joins}) of $1$-bounded entropy:
\[h(M_{\leq 2})\leq h(M_{v_1}\vee A_{2})+h(A_{1}\vee M_{v_2})\leq 0.\]

For the general case, note for every $2\leq i < n$, we have $(M_{v_i}\vee M_{v_{i+1}})\cap M_{\leq i}\supseteq M_{v_i}$, which is diffuse. Thus by Property (\ref{joins}) of $1$-bounded entropy:
\[h(M_{\leq i+1})\leq h(M_{v_i}\vee M_{v_{i+1}})+h(M_{\leq i})\leq h(M_{\leq i}),\]
the last inequality following from an argument identical to the case of $M_{\leq 2}$.
We thus inductively see that $h(M)=h(M_{\leq n})\leq 0$.

(\ref{item:disconnected and diffuse case}):
Let $\cV_1, \ldots, \cV_l$ be the connected components of $\cG$, and note that $l \geq 2$ by assumption.
Let $M_{i}$ be the graph product corresponding to the subgraph induced by $\cV_i$. 
Then the $(M_{i})_{i=1}^{l}$ are freely independent.
Let $x_{i}\in (M_{i})_{s.a.}^{J_{i}}$ be a generating tuple. Set $J=\bigsqcup_{i}J_{i}$, and let $x\in M_{s.a.}^{J}$ be defined by $x\big|_{J_{i}}=x_{i}$.
Since each $M_{v}$ is embeddable, we know that $M_{i}$ is embeddable by \cite{casperscep}.
Since $M_{i}$ is diffuse, this implies by the proof of
 \cite[Corollary 4.7]{JungRegularity}) that $\underline{\delta}_{0}(x_{i})\geq 1$.
Thus, by the proof of \cite{Voiculescu1998}[Remark 4.8],
\[\delta_{0}(x)=\delta_{0}(x_{1})+\sum_{i=1}^{l}\underline{\delta}_{0}(x)\geq l>1,\]
the last inequality following as $\cV$ is disconnected.

\end{proof}

To deduce strong $1$-boundedness from vanishing first $L^{2}$-Betti number, we will apply  the results in \cite{Shl2015} which require positive of certain Fuglede--Kadison pseudo-determinants associated to our relations.
To get this positivity, we will use Theorem \ref{thm:FKD positive intro} which requires polynomials with algebraic coefficients.
 This will force us to reduce general relations among generators for our tracial von Neumann algebras to only relations that have algebraic coefficients. For this, the following lemma will be useful.

\begin{lem}\label{lem: finite presentation lemma}
Let $(M,\tau)$ be a tracial von Neumann algebra and $x=(x_{1},\cdots,x_{r})\in M_{s.a.}^{r}$.
Suppose that for all
$P\in \Qbar{S_{1},\cdots,S_{r}}$
we have that $\tau(P(x))\in \Qbar$.
Let $\ev_{x}\colon \bC\ang{S_{1},\cdots,S_{r}}\to M$ be the $*$-homomorphism $\ev_{x}(P)=P(x).$ Then:
\begin{enumerate}[(a)]
\item $\ker(\ev_{x})$ is the complex linear span of $\ker(\ev_{x})\cap \overline{\bQ}\ang{S_{1},\cdot,S_{r}}$. \label{item:relations reduce to algebraic ones}
\item If $\ker(\ev_{x})$ is finitely generated as a two-sided ideal, then there is a finite set \[F\subseteq \overline{\bQ}\ang{S_{1},\cdots,S_{r}}\] which generates $\ker(\ev_{x})$ as a two-sided ideal.
\label{item: finite algebraic presentation}
\end{enumerate}

\end{lem}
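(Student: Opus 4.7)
The plan is to reduce the statement to a linear algebra fact over the field extension $\bC/\Qbar$ by exploiting the trace form. Define the sesquilinear form
\[
B(P,Q) = \tau(Q(x)^{*}P(x)) \quad \text{on } \bC\ang{S_{1},\dots,S_{r}}.
\]
Since $\tau$ is faithful, I claim the left radical of $B$ coincides with $\ker(\ev_{x})$. Indeed, if $P(x)=0$ then $B(P,Q)=0$ for every $Q$; conversely, if $B(P,Q)=0$ for all $Q$ then in particular $B(P,P)=\tau(P(x)^{*}P(x))=0$, which forces $P(x)=0$ by faithfulness. Moreover, if $P,Q \in \Qbar\ang{S_{1},\dots,S_{r}}$, then $Q^{*}P \in \Qbar\ang{S_{1},\dots,S_{r}}$ (because $\Qbar$ is closed under complex conjugation), and so the hypothesis gives $B(P,Q)=\tau((Q^{*}P)(x)) \in \Qbar$.

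For part (a), fix $P \in \ker(\ev_{x})$ and let $V_{\Qbar}$ be the $\Qbar$-span of the (finitely many) monomials that appear in $P$; set $V = V_{\Qbar}\otimes_{\Qbar}\bC$, which is a finite-dimensional $\bC$-subspace of $\bC\ang{S_{1},\dots,S_{r}}$ containing $P$. Pick a $\Qbar$-basis $v_{1},\dots,v_{n}$ of $V_{\Qbar}$ (simultaneously a $\bC$-basis of $V$) and form the Gram matrix $M_{ij}=B(v_{i},v_{j})$, whose entries lie in $\Qbar$ by the previous paragraph. The left radical of $B|_{V}$ is then the solution set of the linear system $cM=0$ in the unknown row vector $c\in\bC^{n}$. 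Because $M$ has entries in $\Qbar$, standard field-extension linear algebra (rank is preserved when enlarging the base field from $\Qbar$ to $\bC$) yields
\[
\{c\in\bC^{n}:cM=0\} \;=\; \bC\cdot\{c\in\Qbar^{n}:cM=0\}.
\]
Translating back, $V \cap \ker(\ev_{x}) = \bC\cdot (V_{\Qbar}\cap\ker(\ev_{x}))$. Since $P$ lies in the left-hand side, $P$ is a $\bC$-linear combination of elements of $\ker(\ev_{x})\cap\Qbar\ang{S_{1},\dots,S_{r}}$, which proves (a).

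For part (b), suppose $\ker(\ev_{x})$ is generated as a two-sided ideal by a finite set $\{P_{1},\dots,P_{k}\}\subseteq\bC\ang{S_{1},\dots,S_{r}}$. Apply part (a) to each $P_{i}$ to write $P_{i}=\sum_{j}c_{ij}Q_{ij}$ with $c_{ij}\in\bC$ and $Q_{ij}\in\ker(\ev_{x})\cap\Qbar\ang{S_{1},\dots,S_{r}}$. Then every two-sided ideal generator $AP_{i}B$ is a $\bC$-linear combination of $AQ_{ij}B$, so the finite set $F=\{Q_{ij}\}\subseteq\Qbar\ang{S_{1},\dots,S_{r}}$ generates the same two-sided ideal as the $P_{i}$, namely $\ker(\ev_{x})$, establishing (b).

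There is no real obstacle here: the only subtlety is verifying that the left radical of $B$ over $\bC$ descends to that over $\Qbar$, and this is immediate from the fact that the rank of a matrix with entries in $\Qbar$ does not change upon passing to the field extension $\bC$. The hypothesis that $\tau(P(x))\in\Qbar$ for $P\in\Qbar\ang{S_{1},\dots,S_{r}}$ is used solely to ensure the Gram matrix has $\Qbar$-entries, and the faithfulness of $\tau$ (which is built into the definition of a tracial von Neumann algebra used in this paper) is used only to identify the left radical of $B$ with $\ker(\ev_{x})$.
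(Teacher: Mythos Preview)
Your proof is correct and follows essentially the same approach as the paper's: both reduce to the finite-dimensional space spanned by the monomials occurring in $P$, form the Gram matrix $M_{ij}=\tau(m_j(x)^{*}m_i(x))$ (which has $\Qbar$-entries by hypothesis), and use that the null space of a $\Qbar$-matrix over $\bC$ is the $\bC$-span of its null space over $\Qbar$. The paper phrases the key step as ``$A$ is positive semidefinite, so $\langle A\lambda,\lambda\rangle=0$ implies $A\lambda=0$,'' whereas you phrase it as ``the left radical of $B|_{V}$ equals $V\cap\ker(\ev_{x})$ because $B(P,P)=0$ forces $P(x)=0$''; these are the same observation. Part~(b) is handled identically in both.
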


\begin{proof}

(\ref{item:relations reduce to algebraic ones}).
Let $P\in \ker(\ev_{x})$. Then there are monic monomials $m_{1},\cdots,m_{d}$ and $\lambda_{1},\cdots,\lambda_{d}\in \bC$ with $P=\sum_{j=1}^{d}\lambda_{j}m_{j}$. Let $A\in M_{d}(\bC)$ be the matrix whose $ij^{th}$ entry is $\tau(m_{j}(x)^{*}m_{i}(x))$.  Since $\tau$ is a state, $A$ is positive semidefinite.  Let
\[\lambda=\begin{bmatrix}
    \lambda_{1}\\
    \lambda_{2}\\
    \vdots\\
    \lambda_{d}
\end{bmatrix}\in \bC^{d}.\]
By direct calculation,
\[\|P(x)\|_{2}^{2}=\ip{A\lambda,\lambda}.\]
Since $P(x)=0$ and $A$ is positive semidefinite, we know that $A\lambda=0$. Observe that $A$ has algebraic entries, by assumption. Since $A$ has algebraic entries, it follows from linear algebra that the kernel of $A$ (regarded as a linear transformation on $\bC^{d}$) has a basis $v_{1},\cdots,v_{s}\in \overline{\bQ}^{d}$. Choose complex numbers $\alpha_{1},\cdots,\alpha_{s}$ so that
\[\lambda=\sum_{k=1}^{s}\alpha_{k}v_{k}\]
For $k=1,\cdots,s$ write $v_{k}=(v_{kj})_{j=1}^{d}\in \overline{\bQ}^{d}$ and set $P_{k}=\sum_{j=1}^{d}v_{kj}m_{j}$.
Since $v_{k}\in \ker(A)$, we have that
\[\|P_{k}(x)\|_{2}^{2}=\ip{Av_{k},v_{k}}=0.\]
So $P_{k}\in \ker(\ev_{x})\cap \overline{\bQ}\ang{S_{1},\cdots,S_{r}}$ and
\[P=\sum_{k=1}^{s}\alpha_{k}P_{k}.\]

(\ref{item: finite algebraic presentation})
Suppose that $F_{1},\cdots,F_{p}\in \bC\ang{S_{1},\cdots,S_{r}}$
generate $\ker(\ev_{x})$ as a two-sided ideal. By (\ref{item:relations reduce to algebraic ones}), we may find a $t\in \bN$
and $\lambda_{ij}\in \bC$,
$F_{ij}\in \ker(\ev_{x})\cap \overline{\bQ}\ang{S_{1},\cdots,S_{r}}$
for $1\leq i\leq p,1\leq t\leq j$ so that
\[F_{i}=\sum_{j}\lambda_{ij}F_{ij}.\]
Then $\{F_{ij}\}_{1\leq i\leq p,1\leq t\leq j}$ generate
$\ker(\ev_{x})$ as a two-sided ideal.
\end{proof}

We will also need to pass to direct sums of algebras for which the above lemma applies.  For this we use the following lemma.

\begin{lem}\label{lem:annoying direct sum lemma}
Let $A_{1}$, $A_2$ be two $*$-algebras which are generated by $x\in (A_1)_{s.a.}^{r_1}$ and $y\in (A_2)_{s.a.}^{r_2}$.
Suppose that
    \[
        E_{1}\subseteq \bC\ang{S_{1},\cdots,S_{r_{1}}} \qquad \text{ and } \qquad E_2 \subseteq \bC\ang{T_{1},\cdots,T_{r_{2}}}
    \]
generate $\ker(\ev_{x})$ and $\ker(\ev_{y})$, respectively, as two-sided ideals. Denote
    \begin{align*}
        \tilde{x}&:=(1\oplus 0, x_1\oplus 0,\ldots, x_{r_1}\oplus 0)\\
        \tilde{y}&:=(0\oplus 1, 0\oplus y_1,\ldots, 0\oplus y_{r_2}).
    \end{align*}
Then $\ker(\ev_{\tilde{x},\tilde{y}})\subset \bC\ang{S_0,S_1,\ldots, S_{r_1}, T_0, T_1,\ldots, T_{r_2}}$ is generated as a two-sided ideal by the union
    \begin{align*}
        &\{S_{0}P:P\in E_{1}\}\\
        \cup & \{T_{0}P:P\in E_{2}\}\\
        \cup & \{S_{i}T_{j}\colon 0\leq i\leq r_{1},\ 0\leq j\leq r_2\}\\
        \cup & \{S_0S_i - S_i, S_i S_0 - S_i\colon 1\leq i\leq r_1\}\\
        \cup & \{T_0 T_j - T_j, T_j T_0 - T_j \colon 1\leq j\leq r_2\}\\
        \cup & \{S_0 + T_0 -1\}.
    \end{align*}
\end{lem}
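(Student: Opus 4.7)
The plan is to show the two inclusions $I \subseteq \ker(\ev_{\tilde x, \tilde y})$ and $\ker(\ev_{\tilde x, \tilde y}) \subseteq I$, where $I$ denotes the two-sided ideal in $\bC\ang{S_0,\ldots,S_{r_1},T_0,\ldots,T_{r_2}}$ generated by the listed set. The forward inclusion is a direct evaluation check: $\tilde x_0 = 1 \oplus 0$ and $\tilde y_0 = 0 \oplus 1$ are orthogonal central idempotents in $A_1 \oplus A_2$ summing to $1$, so each relation in items (3)--(6) evaluates to zero, while relations (1) and (2) vanish because multiplication by $\tilde x_0$ (resp.\ $\tilde y_0$) projects onto the first (resp.\ second) summand, where $P(x)=0$ (resp.\ $P(y)=0$) already holds by the definition of $E_1, E_2$.

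For the reverse inclusion, I would first extract enough consequences of the listed relations to work modulo $I$ effectively. Writing $S_0 \equiv 1 - T_0 \pmod I$ (from relation (6)) and combining with relations (3)--(5), one derives that $T_0 S_i, T_j S_0, T_j S_i \in I$ for all $i,j$ in the appropriate ranges, the idempotencies $S_0^2 \equiv S_0$ and $T_0^2 \equiv T_0$ modulo $I$, and the centrality of $S_0$ (resp.\ $T_0$) modulo $I$. In particular, every monomial containing both an $S$-variable and a $T$-variable reduces to $0$ modulo $I$.

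Next I would establish a normal form: every polynomial $Q$ satisfies
\[
  Q \equiv S_0\, Q_1(S_1, \ldots, S_{r_1}) + T_0\, Q_2(T_1, \ldots, T_{r_2}) \pmod I
\]
for some polynomials $Q_1, Q_2$ (possibly with constant term). Using the reductions above, mixed monomials vanish modulo $I$; pure monomials $S_0^k$ reduce to $S_0$; pure $S$-monomials containing some $S_i$ with $i \geq 1$ have all their $S_0$-factors absorbed via $S_0 S_i \equiv S_i \equiv S_i S_0$ and can then recover a leading $S_0$ in the same way; and $1 \equiv S_0 + T_0$ handles the constant; the $T$-side is symmetric. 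With this normal form in hand, if $Q \in \ker(\ev_{\tilde x, \tilde y})$, then
\[
  0 = \ev_{\tilde x, \tilde y}(S_0 Q_1 + T_0 Q_2) = (Q_1(x) \oplus 0) + (0 \oplus Q_2(y)) = Q_1(x) \oplus Q_2(y),
\]
forcing $Q_1 \in \ker(\ev_x)$ and $Q_2 \in \ker(\ev_y)$. By hypothesis one can write $Q_1 = \sum_a A_a P_a B_a$ with $P_a \in E_1$ and $A_a, B_a$ in the $S_i$-variables ($i \geq 1$); centrality of $S_0$ modulo $I$ then yields $S_0 Q_1 \equiv \sum_a A_a (S_0 P_a) B_a \pmod I$, which lies in $I$ because each $S_0 P_a$ is a listed generator. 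Symmetrically $T_0 Q_2 \in I$, so $Q \in I$ as required.

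The main obstacle will be the normal-form reduction, and in particular the bootstrapping needed to extract the missing symmetric relations such as $T_j S_i \in I$, together with the centrality and idempotency of $S_0$ and $T_0$, purely from the asymmetric list (1)--(6); this is where the relation $S_0 + T_0 - 1$ interlocks with the one-sided relations (4) and (5) to produce the full set of structural identities of $A_1 \oplus A_2$.
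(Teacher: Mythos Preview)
Your argument is correct. The forward inclusion is straightforward, and for the reverse inclusion your normal-form strategy works: the derivations of $T_jS_i\in I$, of the idempotency of $S_0,T_0$, and of their centrality modulo $I$ all go through from the listed relations together with $S_0+T_0-1$, and once every element is written as $S_0Q_1(S_1,\dots,S_{r_1})+T_0Q_2(T_1,\dots,T_{r_2})$ modulo $I$, evaluating and pulling $S_0$ (resp.\ $T_0$) across using centrality reduces membership in $\ker(\ev_{\tilde x,\tilde y})$ to the generating sets $E_1,E_2$.

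The paper takes a different route: rather than produce a normal form, it forms the quotient $B=\bC\ang{S_0,\dots,T_{r_2}}/J$ and builds an explicit inverse to the induced map $\psi\colon B\to A_1\oplus A_2$. Using the universal property of $A_1\cong\bC\ang{S_1,\dots,S_{r_1}}/\ker(\ev_x)$, one checks that $P\mapsto S_0P+J$ kills $E_1$ and hence descends to $\phi_1\colon A_1\to z_1B$ (where $z_1=S_0+J$); symmetrically one gets $\phi_2$, and $\phi(a_1,a_2)=\phi_1(a_1)+\phi_2(a_2)$ inverts $\psi$. This avoids having to derive the ``missing'' relations $T_jS_i\in I$ and the centrality statements explicitly, since the inverse-map argument only needs the relations in the direction in which they are listed. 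Your approach is more hands-on and makes the structure of $B$ transparent, at the cost of the bootstrapping you flagged; the paper's approach is slicker but hides the same algebra inside the verification that $\phi$ is a well-defined homomorphism.
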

\begin{proof}
Let $J$ be the two-sided ideal in $\bC\ang{S_0,\ldots, S_{r_1}, T_0, \ldots, T_{r_2}}$ generated by the above union, and let $B:=\bC\ang{S_{0},\cdots,S_{r_1},T_0,\ldots, T_{r_2}}/J$. Then $J\subseteq \ker(\ev_{\tilde{x},\tilde{y}})$ and this inclusion induces a unique homomorphism
    \[
        \psi\colon B\to A_{1}\oplus A_{2}
    \]
satisfying $\psi\circ q=\ev_{\tilde{x},\tilde{y}}$, where $q$ is the quotient map onto $B$. To prove the lemma, it suffices to show that this homomorphism is an isomorphism.

To see this, let $z_{1}=S_{0}+J,z_{2}=T_{0}+J$. Then $z_{1},z_{2}$ are orthogonal projections which sum to $1$. Observe that for all $P\in \bC\ang{S_{1},\cdots,S_{r_{1}}}$ we have
\[q(S_{0}P(S_{1},\cdots,S_{r_{1}}))=q(P(S_{0}S_{1},\cdots,S_{0}S_{r_{1}}))\]
Thus for $P\in E_1$ we have
    \[
        q(P(S_{0}S_{1},\cdots,S_{0}S_{r_{1}}))=q(S_{0}P(S_{1},\cdots,S_{r_{1}}))=0.
    \]
Since $E_{1}$ generates $\ker(\ev_{x})$ as a two-sided ideal and $A_{1}\cong \bC\ang{S_{1},\cdots,S_{r_{1}}}/\ker(\ev_{x_{1}})$, we may find a unique homomorphism $\phi_{1}\colon A_{1}\to z_{1}B$ satisfying  $\phi_{1}(P(x))=S_{0}P+J$ for all $P\in \bC\ang{S_{1},\cdots,S_{r_{1}}}$. Similarly, we may find a unique homomorphism $\phi_{2}\colon A_{2}\to z_{2}B$ satisfying
$\phi_{2}(P(y))=T_0 P+J$ for all $P\in \bC\ang{T_{1},\cdots,T_{r_{2}}}$. The relations imposed on $B$ imply that $z_{i}$ acts as the identity on the image of $\phi_{i}$. Since $z_{1},z_{2}$ are orthogonal projections which sum to $1$, this implies that the map $\phi\colon A_{1}\oplus A_{2}\to B$ defined by $\phi(a_{1},a_{2})=\phi_{1}(a_{1})+\phi_{2}(a_{2})$ is a homomorphism. Moreover, $\phi$ is the inverse to $\psi$. Thus $\psi$ is an isomorphism, as desired.
\end{proof}

We are now ready to prove a general theorem from which we will quickly deduce Theorem \ref{thm: main theorem intro0} (\ref{item: finite-dimensional case}) as a corollary. For this we need the first $L^{2}$-Betti number of a $*$-subalgebra of a tracial von Neumann algebra. The $L^{2}$-Betti number of von Neumann algebras was first defined in \cite[Definition 2.1]{ConnesShl} in terms of homology.  Thom later gave a definition in terms of cohomology, see \cite[Section 1]{ThomL2cohom}.

\begin{thm}[{Theorem~\ref{thm: main theorem intro0} (\ref{item: finite-dimensional case})}]
\label{thm: gp of as is s1b}
Let $\cG=(\cV,\cE)$ be a graph with $\#\cV > 1$, and for each $v\in \cV$, let $(M_{v},\tau_{v})$ be a tracial $*$-algebra. Let $(M,\tau)=\gp_{v\in \cG}(M_{v},\tau_{v})$.
Suppose that for all $v\in \cV$, we can write $M_{v}=\bigoplus_{i=1}^{g_{v}}M_{v,i}$ with $g_{v,i}\in \bN$. Further assume that:
\begin{itemize}
\item $\tau(1_{M_{v,i}})\in \bQ$, for all $v\in \cV$ and $i=1,\cdots,g_{v,i}$,
    \item for all $v$ and all $1\leq i\leq g_{v},$ there is a $x_{v,i}\in M_{v,i}^{r_{v,i}}$ which is algebraically sofic and generates $M_{v,i}$ as a von Neumann algebra,
    \item for all $v\in \cV,1\leq i\leq g_{v}$, $\tau_{v}(P(x_{v,i}))\in \overline{\bQ}$ for all $P\in \Qbar\ang{T_{1},T_{1}^{*},\cdots,T_{r},T_{r}^{*}}$
    \item $\ker(\ev_{x_{v,i}})$
     is finitely generated as a two-sided ideal in $\bC\ang{T_{1},T_{1}^{*},\cdots,T_{r_{v,i}},T_{r_{v,i}}^{*}}$ for all $v\in \cV,$ $1\leq i\leq g_{v}$.
    \end{itemize}
Let $A$ be the $*$-subalgebra of $M$ generated by $\bigcup_{v\in V}M_{v}$. If $\beta^{1}_{(2)}(A,\tau)=0$, then $M$ is strongly $1$-bounded.
\end{thm}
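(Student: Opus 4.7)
The plan is to assemble a finite presentation of $A$ with relations having coefficients in $\overline{\bQ}$ whose relation matrix, evaluated in $M$, has positive Fuglede--Kadison pseudo-determinant; combined with $\beta^{(1)}_{(2)}(A,\tau)=0$, Shlyakhtenko's \cite[Theorem~2.5]{Shl2015} then delivers strong $1$-boundedness, as already foreshadowed in the discussion following the conjecture.

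First I would lift the vertex-algebra algebraic soficity to the full graph product. Applying Lemma~\ref{lem:direct sums of Galois bounded sequences} along each decomposition $M_c=\bigoplus_i M_{c,i}$ (exactly as in the proof of Theorem~\ref{thm: finite dim algebras are alg sofic}), each $M_c$ is algebraically sofic with generating tuple obtained by concatenating the (zero-padded) $x_{c,i}$ together with the central projections $1_{M_{c,i}}$. Theorem~\ref{thm: gp of as is as} then yields algebraic soficity of $M$ itself, producing a self-adjoint generating tuple $x$ of $M$ (the concatenation of real and imaginary parts of all $x_{c,i}$ together with the central projections) admitting a Galois bounded sequence of microstates $X^{(k)}$.

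Next I would construct the desired finite algebraic presentation. The finite presentability of each $\ker(\ev_{x_{c,i}})$ feeds into Lemma~\ref{lem:annoying direct sum lemma}, applied inductively in $i$, to produce a finite presentation of each $M_c$. Adjoining across $c\in \cC$ the graph-product commutation relations $[a,b]=0$ for generators $a\in M_c$, $b\in M_{c'}$ with $(c,c')\in\cE$ (whose coefficients are integers) one obtains a finite presentation of $A$ in terms of $x$. The hypothesis that the normalized traces of $\overline{\bQ}$-polynomials in each $x_{c,i}$ lie in $\overline{\bQ}$, together with the moment formulas for graph products from \cite{Mlot2004, CaFi17} (which express $\tau(P(x))$ as an $\overline{\bQ}$-linear combination of vertex-algebra traces on $\cG$-reduced products), shows that $\tau\circ\ev_{x}$ sends $\overline{\bQ}\ang{S_1,\ldots,S_r}$ into $\overline{\bQ}$. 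Lemma~\ref{lem: finite presentation lemma}(\ref{item: finite algebraic presentation}) then allows us to replace each of the finitely many relations by finitely many relations whose coefficients lie in $\overline{\bQ}$.

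Assembling these algebraic relations as the rows of a matrix $P\in M_{m,1}(\overline{\bQ}\ang{S_1,\ldots,S_r})$, Theorem~\ref{item:FKD positive repeat} gives $\Det_M^+(P(x))>0$. Combined with $\beta^{(1)}_{(2)}(A,\tau)=0$, \cite[Theorem~2.5]{Shl2015} then yields that $M$ is strongly $1$-bounded. The principal obstacle is the bookkeeping in the third paragraph: one must interleave the direct-sum and graph-product presentations so that the combined evaluation map still sends $\overline{\bQ}$-polynomials into $\overline{\bQ}$ (the hypothesis required by Lemma~\ref{lem: finite presentation lemma}), and one must juggle self-adjointness in translating between the $*$-algebra presentation (in variables $T_j,T_j^*$) and the self-adjoint generating tuple format required for Theorem~\ref{item:FKD positive repeat}.
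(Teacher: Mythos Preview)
Your overall strategy matches the paper's: assemble algebraic soficity via Lemma~\ref{lem:direct sums of Galois bounded sequences} and Theorem~\ref{thm: gp of as is as}, build a finite presentation with $\overline{\bQ}$-coefficients, and invoke Shlyakhtenko. However, the final step has a genuine gap.

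The matrix whose Fuglede--Kadison pseudo-determinant must be shown positive is \emph{not} the column $P$ of relations---since $P(x)=0$, the quantity $\Det_M^+(P(x))$ is trivially $1$ and carries no information. What Shlyakhtenko's criterion actually requires is positivity of $\Det^+$ for the \emph{free Jacobian} $D_F$ built from the relation tuple $F=(F_1,\ldots,F_l)$: the matrix in $M_{l+1,r}\bigl(\overline{\bQ}\ang{S_1,\ldots,S_r}\otimes \overline{\bQ}\ang{S_1,\ldots,S_r}\bigr)$ whose first row has entries $S_j\otimes 1 - 1\otimes S_j$ and whose remaining rows are assembled from the free difference quotients $\partial_j F_i$. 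After evaluation, $D_F(x)$ lives in matrices over $M\overline{\otimes}M^{op}$, and it is a folklore computation (cited in the paper) that $\dim_{M\overline{\otimes}M^{op}}\ker D_F(x)=\beta^{1}_{(2)}(A,\tau)$; the vanishing Betti number thus forces $D_F(x)$ to be injective. To then invoke Theorem~\ref{item:FKD positive repeat} on $D_F(x)$ you need Galois bounded microstates for $(x\otimes 1,\,1\otimes x)$ in $M\overline{\otimes}M^{op}$, which is exactly what Proposition~\ref{prop: alg sofic tensor} supplies. This passage through the Jacobian and through $M\overline{\otimes}M^{op}$ is entirely missing from your outline.

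A secondary issue: applying Lemma~\ref{lem: finite presentation lemma} at the graph-product level requires $\tau(P(x))\in\overline{\bQ}$ for every $\overline{\bQ}$-polynomial $P$ in the full generating tuple. But your tuple includes the central projections $1_{M_{c,i}}$, and $\tau(1_{M_{c,i}})=\tau_c(1_{M_{c,i}})$ need not be algebraic---only the \emph{normalized} vertex traces are assumed so. The paper avoids this by applying Lemma~\ref{lem: finite presentation lemma} to each summand $\bigl(M_{c,i},\,\tau_c(1_{M_{c,i}})^{-1}\tau_c\bigr)$ \emph{before} assembling via Lemma~\ref{lem:annoying direct sum lemma} and adjoining the commutator relations; the extra relations introduced by those two steps already have integer coefficients, so no further appeal to Lemma~\ref{lem: finite presentation lemma} is needed.
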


\begin{proof}
First, notice that by considering the real and imaginary parts of coordinates of $x_{v,i}$, we may assume that $x_{v,i}$ is a tuple of self-adjoint elements.
Let $A$ be the $*$-algebra generated  by the $M_{v}$ for $v\in \cV$. Then $A$ is finitely presented, namely there is an $r\in \bN$, a tuple $x\in A_{s.a.}^{r}$ so that:
\begin{enumerate}
    \item the evaluation homomorphism $\ev_{x}\colon \bC\ang{S_{1},\cdots,S_{r}}\to A$ given by $\ev_{x}(P)=P(x)$ is surjective,
    \item if $J=\ker(\ev_{x})$, then $J$ is finitely generated as a two-sided ideal say by $(F_{1},\cdots,F_{l})$.
\end{enumerate}
In fact,
we may choose $x$ to be algebraically sofic and to choose $F_{i}\in \Qbar\ang{S_{1},\cdots,S_{r}}$.
One way to see this is as follows.

By assumption, $\ker(\ev_{x_{v,i}})$ can be generated as a two-sided ideal by $(F^{(v,i)}_{h})_{h=1}^{k_{v,i}}$. Our assumptions on traces of algebraic polynomials in $x_{v,i}$ and
 Lemma \ref{lem: finite presentation lemma} (\ref{item: finite algebraic presentation}) implies  we can choose $F^{(v,i)}_{h}$ to be in $\Qbar\ang{S_{1},\cdots,S_{r_{v,i}}}$. Set $r_{v}=g_{v}+\sum_{i}r_{v,i}$ and let
\[x_{v}=(1\oplus 0^{\oplus g_{v}-1},0\oplus 1\oplus 0^{\oplus g_{v}-2},\cdots, 0^{\oplus g_{v}-1}\oplus 1,x_{v,1}\oplus 0^{\oplus g_{v}-1},0\oplus x_{v,2}\oplus 0^{\oplus g_{v}-2},\cdots,0^{\oplus g_{v}-1}\oplus x_{v,g_{v}}).\]

 Let $r=\sum r_{v}$, $k_{v}=\sum_{i}k_{v,i}$ and $x\in M_{s.a.}^{r}$ be given by concatenating the $x_{v}$. We relabel the abstract variables $S_{1},\cdots,S_{r}$ as $S^{(v)}_{j}$ with $v\in \cV$ and $1\leq j\leq r_{v}$, and set $S^{(v)}=(S^{(v)}_{j})_{1\leq j\leq r_{v}}$.
 By iterated applications of Lemma \ref{lem:annoying direct sum lemma}, we can find a  finite tuple $F^{(v)}\in (\Qbar\ip{S_{1},\cdots,S_{r_{v}}})^{\oplus k_{v}}$ which generates $\ker(\ev_{x_{v}})$ as a two-sided ideal in $\bC\ang{S_{1},\cdots,S_{r_{v}}}$.

To generated $\ker(\ev_{x})$, we need to take all the $F^{(v)}(S^{(v)})$ and also polynomials of the form
\[S^{(v_1)}_{j}S^{(v_2)}_{p}-S^{(v_2)}_{p}S^{(v_1)}_{j}\]
for all $(v_1,v_2)\in \cE$, $1\leq j\leq k_{v_1}$,$1\leq p\leq k_{v_2}$.
Then these polynomials generate $\ker(\ev_{x})$ and have algebraic coefficients. Moreover, $x$ is an algebraically sofic tuple by the proof of Theorem \ref{thm: finite dim algebras are alg sofic}, and Theorem \ref{thm: gp of as is as}.

Let $F=(F_{1},\cdots,F_{l})\in \Qbar\ang{S_{1},\cdots, S_{r}}^{\oplus l}$. For $i=1,\cdots,r$ define
\[(\partial_{i} F)\in M_{l,1}(\Qbar\ang{S_{1},\cdots, S_{r}}\otimes \Qbar\ang{S_{1},\cdots, S_{r}})\]
by
$(\partial _{i}F)_{j1}=\partial_{i}F_{j}.$
Here $\partial_i : \Qbar\ang{S_1, \ldots, S_r} \to \Qbar\ang{S_1, \ldots, S_r} \otimes \Qbar\ang{S_1, \ldots, S_r}$ are Voiculescu's free difference quotients (see \cite[Section 2]{entropy5}), i.e., the unique derivations with $\partial_i(T_j) = \delta_{i=j}1\otimes 1$.
Finally, set
\[D_{F}=\begin{bmatrix}
S_{1}\otimes 1-1\otimes S_{1} & S_{2}\otimes 1-1\otimes S_{2}&\cdots &S_{r}\otimes 1-1\otimes S_{r}\\
\partial_{1} F&\partial_{2}F&\cdots&\partial_{r}F
\end{bmatrix}\]
Which is an element of $ M_{l+1, r}(\Qbar\ang{T_{1},\cdots, T_{r}}\otimes \Qbar\ang{T_{1},\cdots, T_{r}})$.
It is then a folklore result (see e.g the proofs of \cite[Theorem 1.1]{vanishingl2bettis1b}, \cite[Lemma 4.1]{BrannanVergnioux}) that
\[\dim_{M\overline{\otimes}M^{op}}(\ker(D_{F}(x)))=\beta^{1}_{(2)}(A,\tau)=0.\]
Thus if $\beta^{1}_{(2)}(A,\tau)=0$, then $D_{F}(x)$ is injective, and so $\mu_{|D_{F}(x)|}(\{0\})=0$.
Since $x$ is algebraically sofic, it follows by Proposition \ref{prop: alg sofic tensor} and Theorem \ref{thm:FKD positive intro} that
\[\Det_{M}^{+}(D_{F}(x))>0.\]
Moreover, $F(x)=0$ by construction.
Hence it follows by \cite[Theorem 1.5]{Shl2015}, \cite[Theorem 1.2]{vanishingl2bettis1b} that $M$ is strongly $1$-bounded.
\end{proof}

We remark that this theorem implies Theorem \ref{thm: main theorem intro0} (\ref{item: finite-dimensional case}) by taking each $M_{v}$ to be finite-dimensional tracial algebras where every central projection has rational trace. Indeed in this case we may take each $M_{v,i}$ to be a matrix algebra. Now use the isomorphism $M_{n}(\bC)\cong L(\bZ/n\bZ)\rtimes \bZ/n\bZ$ from Proposition \ref{prop:construction of GBM for matrices}. The generators for $M_{n}(\bC)$ given in Proposition \ref{prop:construction of GBM for matrices} (\ref{item:GNS matrix entries}) are algebraically sofic, by Proposition \ref{prop:construction of GBM for matrices}, and it is direct to check that monic monomials in these generators have algebraic traces.

\bibliographystyle{amsalpha}
\bibliography{graphproducts}

\end{document}